\newenvironment{proof}{\paragraph*{Proof}}{\mbox{}\hfill$\Box$}
\newtheorem{definition}[thm]{Definition}
\newtheorem{theorem}[thm]{Theorem}
\newtheorem{proposition}[thm]{Proposition}
\newtheorem{example}[thm]{Example}
\renewcommand{\leq}{\leqslant}
\renewcommand{\geq}{\geqslant}
\newcommand{\EE}{\mathbb{E}}
\newcommand{\RR}{\mathbb{R}}
\newcommand{\RV}{{\cal R}}
\newcommand{\bF}{\overline{F}}
\newcommand{\bG}{\overline{G}}
\newcommand{\bH}{\overline{H}}
\newcommand{\Y}{\check{Y}}
\newcommand{\Ynk}{\Y_{n-k:n}}
\DeclareMathOperator{\cov}{cov}
\DeclareMathOperator{\E}{E}
\DeclareMathOperator{\var}{var}
\newcommand{\eps}{\varepsilon}
\newcommand{\dto}{\rightsquigarrow}
\newcommand{\eqd}{\stackrel{d}{=}}
\newcommand{\rmd}{\, \mathrm{d}}
\begin{document}

\begin{frontmatter}

\title{Second-Order Refined Peaks-Over-Threshold Modelling for Heavy-Tailed Distributions}

\author[KUL]{Jan Beirlant\thanksref{IAP}}
\ead{jan.beirlant@wis.kuleuven.be}
\address[KUL]{Department of Mathematics and Leuven Statistics Research Centre \\
Katho\-lie\-ke Uni\-ver\-si\-teit Leuven, Celestijnenlaan 200b, B-3001 Heverlee, Belgium}

\author[JRC]{Elisabeth Joossens}
\ead{elisabeth.joossens@jrc.it}
\address[JRC]{Joint Research Centre, European Commission \\ 
Via Fermi 2749, 21027 Ispra (VA), Italy}

\author[UCL]{Johan Segers\corauthref{cor}\thanksref{IAP}}
\ead{johan.segers@uclouvain.be}
\thanks[IAP]{Supported by IAP research network grant no.\ P6/03 of the Belgian government (Belgian Science Policy).}
\corauth[cor]{Corresponding author.}
\address[UCL]{Institut de statistique, Universit\'e catholique de Louvain \\
Voie du Roman Pays 20, B-1348 Louvain-la-Neuve, Belgium}

\begin{abstract}
Modelling excesses over a high threshold using the Pareto or generalized Pareto distribution (PD/GPD) is the most popular approach in extreme value statistics. This method typically requires high thresholds in order for the (G)PD to fit well and in such a case applies only to a small upper fraction of the data. The extension of the (G)PD  proposed in this paper is able to describe the excess distribution for lower thresholds in case of heavy tailed distributions. This yields a statistical model that can be fitted to a larger portion of the data. Moreover, estimates of tail parameters display stability for a larger range of thresholds. Our findings are supported by asymptotic results, simulations and a case study.
\end{abstract}

\begin{keyword}
bias reduction \sep 
Hill estimator \sep 
extended Pareto distribution \sep
extreme value index \sep
heavy tails \sep 
regular variation \sep 
tail empirical process \sep 
tail probability \sep 
Weissman probability estimator
\end{keyword}
\end{frontmatter}

\section{Introduction}

It is well known that a distribution is in the max-domain of attraction of an extreme value distribution if and only if the distribution of excesses over high thresholds is asymptotically generalized Pareto (GP) \citep{BalkemadeHaan74, Pickands75}. This result gave rise to the peaks-over-threshold methodology introduced in \citet{DavisonSmith90}; see also \citet{Coles01}. The method consists of two components: modelling of clusters of high-threshold exceedances with a Poisson process and modelling of excesses associated to the cluster peaks with a GPD. In practice, a way to verify the validity of the model is to check whether the estimates of the GP shape parameter are stable when the model is fitted to excesses over a range of thresholds. The question then arises how to proceed if this threshold stability is not visible for a given data set. From a theoretical point of view, absence of the stability property can be explained by a slow rate of convergence in the Pickands--Balkema--de Haan theorem. In case of heavy-tailed distributions, the same issue arises when fitting a Pareto distribution (PD) to the relative excesses over high, positive thresholds.

A possible solution is to build a more flexible model capable of capturing the deviation between the true excess distribution and the asymptotic model. For heavy-tailed distributions, this deviation can be parametrized using a power series expansion of the tail function \citep{Hall82}, or more generally via second-order regular variation \citep{GdH87, BGT}.

The aim of this paper is to propose such an extension, called the extended Pareto or extended generalized Pareto distribution (EPD/EGPD). A key distinction with other approaches is that although in previous papers the second-order approximation is used for adjusting the inference of the tail index, inference on the tail itself is still based on the GPD; in contrast, in our approach the EP(G)D is fitted directly to the high-threshold excesses. Indeed, as we will show later, even if the (G)PD parameters are estimated in an unbiased way, tail probability estimators may still exhibit asymptotic bias if based upon the (G)PD approximation. 

The main advantages of the new model are a reduction of the bias of estimators of tail parameters and a good fit to excesses over a larger range of thresholds. In an actuarial context, the relevance of using more elaborate models has already been discussed for instance in \citet{FHR02} and \citet{CoorayAnanda05}.

In case of heavy-tailed distributions, it is more convenient to work with relative excesses $X/u$ rather than absolute excesses $X-u$. Under the domain of attraction condition the limit distribution of $X/u$ given $X > u$ for $u \to \infty$ is the PD. The EPD and EGPD presented here are related through the same affine transformation that links these relative and absolute excesses. Building on the theory of generalized regular variation of second order in \citet{dHS96}, it is also possible to construct an extension of the GPD with comparable merits applicable to distributions in all max-domains of attraction. However, parameter estimation in this more general setting is numerically quite involved \citep{BJS02}: the model contains one additional parameter and the upper endpoint of the distribution depends in a complicated way on the parameters, which complicates both theory and computations.

Bias-reduction methods have already been proposed in, amongst others, \citet{FeuervergerHall99}, \citet{GMN00}, \citet{BDGM99}, \citet{DBGS02}, \citet{GM02}, and \citet{GM04}. These methods focus on the distribution of log-spacings of high order statistics. Moreover, \textit{ad hoc} construction methods for asymptotically unbiased estimators of the extreme value index were introduced in \citet{Peng98}, \citet{Drees96} and \citet{Segers05}. In contrast, next to providing bias-reduced tail index estimators, our model can be fitted directly to the excesses over a high threshold. The fitted model can then be used to estimate any tail-related risk measure, such as tail probabilities, tail quantiles (or value-at-risk), etc. 

In the same spirit as in this paper, a mixture model with two Pareto components was proposed in \citet{PengQi04}. The advantage of our model is that it also incorporates the popular GPD. From our experience, this connection can assist in judging the quality of the GPD fit; see for instance the case study in Example~\ref{Ex:secura}.

The paper is structured as follows. The next section provides the definition of the E(G)PD, which is shown to yield a more accurate approximation to the distribution of absolute and relative excesses for a wide class of heavy-tailed distributions. Estimators of the EPD parameters are derived in Section~\ref{S:par} using the linearized score equations, and their asymptotic normality is formally stated. In Section~\ref{S:compar}, we compare the asymptotic distribution and the finite-sample behavior of the estimators of the extreme value index following from PD, GPD and EPD modelling. To illustrate how to apply the methodology to the estimation of general tail-related risk measures, we elaborate in Section~\ref{S:prob} on tail probability estimation with theoretical results and a practical case. The appendices, finally, contain the statement and proof of an auxiliary result on a certain tail empirical process followed by the proofs of the main theorems.

\section{The Extended (Generalized) Pareto Distribution}
\label{S:EGPD}

\begin{definition}
\label{D:EGPD}
The \emph{Extended Pareto Distribution (EPD)} with parameter vector $(\gamma, \delta, \tau)$ in the range $\tau < 0 < \gamma$ and $\delta > \max(-1, 1/\tau)$ is defined by its distribution function
\[
    G_{\gamma,\delta,\tau}(y) =
    \begin{cases}
        1 - \{y(1 + \delta - \delta y^\tau)\}^{-1/\gamma}, & \text{if $y > 1$}, \\
        0, & \text{if $y \leq 1$.}
    \end{cases}
\]
The \emph{Extended Generalized Pareto Distribution (EGPD)} is defined by its distribution function
\[
    H_{\gamma,\delta,\tau}(x) = G_{\gamma,\delta,\tau}(1+x), \qquad x \in \RR.
\]
\end{definition}

The ordinary Pareto Distribution (PD) with shape parameter $\alpha
> 0$ is a member of the EPD family: take $\gamma = 1/\alpha$ and
$\delta = 0$ (arbitrary $\tau$). The Generalized Pareto Distribution
(GPD) with positive shape parameter $\gamma > 0$ and scale parameter
$\sigma > 0$ is a member of the EGPD family: take $\tau = -1$ and
$\delta = \gamma / \sigma - 1$. Finally, the distribution of the
random variable $Y$ is EPD($\gamma,\delta,\tau$) if and only if the
distribution of $Y - 1$ is EGPD($\gamma,\delta,\tau$).

We will use the E(G)PD to model tails of heavy-tailed distributions
that satisfy a certain second-order condition, to be described next.
For a distribution function $F$, write $\bF = 1-F$. Recall that a
positive, measurable function $f$ defined in some right neighborhood
of infinity is \emph{regularly varying} with index $\beta \in \RR$
if $\lim_{u \to \infty} f(ux)/f(u) = x^\beta$ for all $x \in (0,
\infty)$; notation $f \in \RV_\beta$. The following definition describes a subset of the
class of distribution functions $F$ for which $\bF \in
\RV_{-1/\gamma}$, $\gamma > 0$. Note that the latter is precisely the class of distributions in the max-domain of attraction of the Fr\'echet distribution with shape parameter $1/\gamma$.

\begin{definition}
\label{C:2nd}
Let $\gamma > 0$ and $\tau < 0$ be constants. A distribution function $F$ is said to belong to the class ${\cal F}(\gamma, \tau)$ if $x^{1/\gamma} \bF(x) \to C \in (0, \infty)$ as $x \to \infty$ and if the function $\delta$ defined via
\begin{equation}
\label{E:delta}
    \bF(x) = C x^{-1/\gamma} \{ 1 + \gamma^{-1} \delta(x) \}
\end{equation}
is eventually nonzero and of constant sign and such that $|\delta| \in \RV_\tau$.
\end{definition}

Note that $|\delta| \in \RV_\tau$ with $\tau < 0$ implies $\delta(x)
\to 0$ as $x \to \infty$. In many examples, the function $\delta$ in
Definition~\ref{C:2nd} is actually of the form $\delta(x) \sim D
x^\tau$ as $x \to \infty$ for some nonzero constant $D$, a class of
distributions which was first considered in \citet{Hall82}. See
Table~\ref{tab:parameters} for examples; for later use, we also list
$\rho = \gamma \tau$ (see Lemma~\ref{L:rho} below).

\begin{table}
\begin{center}
\begin{tabular}{@{}l l l l l}
\hline \hline \\[-2em]
distribution & distribution function & $\gamma$ & $\tau$ & $\rho = \gamma \tau$ \\
{[parameters]} \\ \hline \\[-1em]
Burr($\gamma,\rho,\beta$) &
    $1 - (1+x^{-\rho/\gamma}/\beta)^{1/\rho}$ &
    $\gamma$ & $\phantom{-}\rho / \gamma$ & $\phantom{-}\rho$ \\
    {[$\gamma>0$, $\rho<0$, $\beta>0$]} \\[1ex]
Fr\'echet($\alpha$) &
    $\exp(-x^{-\alpha})$ & $1/\alpha$ & $-\alpha$ & $-1$ \\
    {[$\alpha>0$]} \\[1ex]
GPD($\gamma,\sigma$) &
    $1 - (1 + \gamma x/\sigma)^{-1/\gamma}$ &
    $\gamma$ & $-1$ & $-\gamma$ \\
    {[$\gamma > 0$, $\sigma > 0$]} \\[1ex]
Student-\emph{t}$_{\nu}$ &
    $C(\nu) \int_{-\infty}^x (1+\frac{y^2}{\nu})^{-(\nu+1)/2} \rmd y$ &
    $1/\nu$ & $-2$ & $-2/\nu$ \\
    {[$\nu>0$]} \\
\hline \hline \\
\end{tabular}
\caption{\it Extreme value index $\gamma$ and second-order constants $\tau$ and $\rho = \gamma \tau$ for selected heavy-tailed distributions. 
\label{tab:parameters}}
\end{center}
\end{table}

Let $X$ be a random variable with distribution function $F$ and let $u > 0$ be such that $F(u) < 1$. The conditional distributions of relative and absolute excesses of $X$ over $u$ are given by
\[
    \Pr(X/u > y \mid X > u) = \frac{\bF(uy)}{\bF(u)} \quad \text{and} \quad
    \Pr(X-u > x \mid X > u) = \frac{\bF(u+x)}{\bF(u)}
\]
for $x \geq 0$ and $y \geq 1$. The next proposition shows that for $F \in {\cal F}(\gamma, \tau)$, the EPD and the EGPD improve the PD and GPD approximations to these excess distributions with an order of magnitude.

\begin{proposition}
\label{P:EGPD}
If $F \in {\cal F}(\gamma, \tau)$, then as $u \to \infty$,
\begin{align}
\label{E:P:EPD}
    \sup_{y \geq 1}
    \biggl| \frac{\bF(uy)}{\bF(u)} - \bG_{\gamma,\delta(u),\tau}(y) \biggr| &= o\{|\delta(u)|\}, \\
\label{E:P:EGPD}
    \sup_{x \geq 0}
    \biggl| \frac{\bF(u+x)}{\bF(u)} - \bH_{\gamma,\delta(u),\tau}(x/u) \biggr| &= o\{|\delta(u)|\}.
\end{align}
\end{proposition}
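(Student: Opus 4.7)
The plan is to write both sides as $y^{-1/\gamma}$ plus a first-order correction in $\delta(u)$, so that the leading $y^{-1/\gamma}$ cancels, then use Potter's inequality to control everything uniformly in $y \geq 1$. Starting from the representation \eqref{E:delta}, for $y \geq 1$ and $u$ large enough that $\delta(u) \neq 0$,
\[
    \frac{\bF(uy)}{\bF(u)}
    = y^{-1/\gamma} \cdot \frac{1 + \gamma^{-1} \delta(uy)}{1 + \gamma^{-1} \delta(u)},
    \qquad
    \bG_{\gamma,\delta(u),\tau}(y)
    = y^{-1/\gamma} \bigl(1 - \delta(u)(y^\tau - 1)\bigr)^{-1/\gamma}.
\]
Writing $d = \delta(u)$ and $r_u(y) = \delta(uy)/\delta(u)$, and using that $d \to 0$ and $|d(y^\tau-1)| \leq |d|$ for $y \geq 1$, a one-term Taylor expansion of each factor gives, uniformly in $y \geq 1$,
\[
    \frac{\bF(uy)}{\bF(u)} - \bG_{\gamma,d,\tau}(y)
    = \gamma^{-1} d\, y^{-1/\gamma} \bigl(r_u(y) - y^\tau\bigr) + O\bigl(d^2 y^{-1/\gamma}(1 + |r_u(y)|)\bigr).
\]

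The task is then to show the right-hand side is $o(d)$ uniformly in $y \geq 1$. Because $|\delta| \in \RV_\tau$ and $\delta$ is eventually of constant sign, $r_u(y) \to y^\tau$ as $u \to \infty$, and Potter's inequality furnishes, for any small $\eps > 0$ and all $u$ large, the bound $|r_u(y)| \leq (1+\eps) y^{\tau+\eps}$ for all $y \geq 1$. Choose $\eps$ small enough that $\tau + \eps - 1/\gamma < 0$. Then the remainder factor is uniformly bounded: $y^{-1/\gamma}(1 + |r_u(y)|) \leq 1 + (1+\eps) y^{\tau + \eps - 1/\gamma} \leq 2 + \eps$ for $y \geq 1$, so the $O(d^2)$ term is $o(d)$ uniformly.

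For the main term I would split $[1,\infty) = [1,M] \cup (M,\infty)$. On $[1,M]$, uniform convergence of regularly varying functions on compacts yields $\sup_{1 \leq y \leq M} |r_u(y) - y^\tau| \to 0$, and $y^{-1/\gamma} \leq 1$ there. On $(M,\infty)$, Potter and the decay $y^{-1/\gamma}$ give
\[
    \sup_{y > M} y^{-1/\gamma} |r_u(y) - y^\tau|
    \leq (1+\eps) M^{\tau + \eps - 1/\gamma} + M^{\tau - 1/\gamma},
\]
which can be made arbitrarily small by taking $M$ large. A standard $\eta$--$M$--$u$ argument concludes that $\sup_{y \geq 1} y^{-1/\gamma} |r_u(y) - y^\tau| \to 0$, so the main term is $o(d)$, proving \eqref{E:P:EPD}.

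For \eqref{E:P:EGPD}, the substitution $y = 1 + x/u$ is a bijection of $\{x \geq 0\}$ with $\{y \geq 1\}$, sends $\bF(uy)$ to $\bF(u+x)$, and sends $\bG_{\gamma,\delta(u),\tau}(y)$ to $\bH_{\gamma,\delta(u),\tau}(x/u)$ by definition of $H$, so \eqref{E:P:EGPD} follows at once from \eqref{E:P:EPD}. The one genuine obstacle is the uniformity over the unbounded range $y \geq 1$: the factor $y^{-1/\gamma}$ must be paired with Potter's bounds on $r_u$, since uniform convergence on compacts alone is insufficient to absorb the contribution from large $y$.
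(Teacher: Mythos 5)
Your proof is correct, and its skeleton coincides with the paper's: both arguments factor out $y^{-1/\gamma}$, expand the ratio $\{1+\gamma^{-1}\delta(uy)\}/\{1+\gamma^{-1}\delta(u)\}$ and the EPD tail $\{1+\delta(u)(1-y^\tau)\}^{-1/\gamma}$ to first order in $\delta(u)$ (the latter uniformly in $y\geq 1$ because $0\leq 1-y^\tau\leq 1$), so that the whole issue reduces to showing $\sup_{y\geq 1} y^{-1/\gamma}\,|\delta(uy)/\delta(u)-y^\tau| \to 0$. Where you diverge is in how this last step is handled. The paper disposes of it in one line by citing the Uniform Convergence Theorem for regularly varying functions of \emph{negative} index \citep[Theorem~1.5.2]{BGT}, which for $|\delta|\in\RV_\tau$ with $\tau<0$ gives $\sup_{y\geq 1}|\delta(uy)/\delta(u)-y^\tau|\to 0$ on the whole half-line, with no need for the damping factor $y^{-1/\gamma}$; your closing remark that ``uniform convergence on compacts alone is insufficient'' is true as far as it goes, but the stronger uniform statement is exactly what the cited theorem supplies. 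You instead re-derive a sufficient version of it by hand: Potter's bounds to control $\delta(uy)/\delta(u)$ for large $y$ (legitimate here since $\delta$ is eventually of constant sign, so the ratio equals $|\delta(uy)|/|\delta(u)|$), a compact/tail split at $M$, and the decay of $y^{-1/\gamma}$ to absorb the tail. This makes your argument more self-contained and also lets you handle the extra $O(\delta(u)^2\,y^{-1/\gamma}\{1+|\delta(uy)/\delta(u)|\})$ remainder created by your slightly looser expansion of the first factor (the paper avoids that remainder by keeping the ratio exact), at the cost of a longer proof of a fact available off the shelf. Both routes are sound; yours buys elementarity, the paper's buys brevity.
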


\begin{proof}
Equation~\eqref{E:P:EGPD} follows directly from \eqref{E:P:EPD} by writing $u + x = uy$ or $y = 1 + x/u$ and exploiting the link between the EPD and the EGPD. So let us show \eqref{E:P:EPD}. On the one hand, we have
\[
    \frac{\bF(uy)}{\bF(u)}
    = y^{-1/\gamma} \frac{1 + \gamma^{-1} \delta(uy)}{1 + \gamma^{-1} \delta(u)}
    = y^{-1/\gamma} \left( 1 - \gamma^{-1} \delta(u) \frac{1 - \frac{\delta(uy)}{\delta(u)}}{1 + \gamma^{-1} \delta(u)} \right).
\]
On the other hand, since $0 \leq 1 - y^\tau \leq 1$ for $y \geq 1$ and since $\delta(u) \to 0$,
\begin{multline*}
	[y \{ 1 + \delta(u) - \delta(u) y^\tau \}]^{-1/\gamma} \\
  = y^{-1/\gamma} \{1 - \gamma^{-1} \delta(u) (1 - y^\tau)\} + o\{|\delta(u)|\}, \qquad u \to \infty,
\end{multline*}
uniformly in $y \geq 1$. As a consequence,
\begin{multline*}
    \frac{\bF(uy)}{\bF(u)}
    - [y \{ 1 + \delta(u) - \delta(u) y^\tau \}]^{-1/\gamma} \\
    = - \gamma^{-1} y^{-1/\gamma} \delta(u)
    \left( \frac{1 - \frac{\delta(uy)}{\delta(u)}}{1 + \gamma^{-1} \delta(u)} - (1 - y^\tau) \right)
    + o\{|\delta(u)|\}, \qquad u \to \infty,
\end{multline*}
uniformly in $y \geq 1$. The asymptotic relation~\eqref{E:P:EPD} now follows from the uniform convergence theorem for regularly varying functions with negative index \citep[Theorem~1.5.2]{BGT}.
\end{proof}

If in \eqref{E:P:EPD} we would replace the EPD tail function $\bG_{\gamma,\delta(u),\tau}(y)$ by the PD tail function $y^{-1/\gamma}$, the rate of convergence would be $O\{|\delta(u)|\}$ only. Similarly, if in \eqref{E:P:EGPD} we would replace the EGPD tail function $\bH_{\gamma,\delta(u),\tau}(x/u)$ by the GPD tail function $(1 + \gamma x / \sigma)^{-1/\gamma}$ for some $\sigma = \sigma(u)$, then, provided $\tau \neq -1$, the rate of convergence would again be $O\{|\delta(u)|\}$ only. If $\tau = -1$, the EGPD is just a reparametrization of the GPD, so that in that case, the GPD approximation is already of the order $o\{|\delta(u)|\}$.

It will be useful to rephrase our second-order assumption on $F$ in terms of the tail quantile function $U$ defined by
\begin{equation}
\label{E:U}
	U(y) = Q(1 - 1/y) \quad \text{ with } \quad Q(p) = \inf \{ x \in \mathbb{R} : F(x) \geq p \}, 
\end{equation}
where $y \in (1, \infty)$ and $p \in (0, 1)$. Note that $U$ is a (generalized) inverse of $1/\bF$.

\begin{lem}
\label{L:rho}
If $F \in {\cal F}(\gamma, \tau)$ with $\lim_{x \to \infty} x^{1/\gamma} \bF(x) = C \in (0, \infty)$, then $\lim_{y \to \infty} y^{-\gamma} U(y) = C^\gamma$, and the function $a$ defined implicitly by
\begin{equation}
\label{E:a}
	U(y) = C^\gamma y^\gamma \{1 + a(y)\}
\end{equation}
satisfies $a(y) = \delta(U(y)) \{1 + o(1)\} = \delta(C^\gamma y^\gamma) \{1 + o(1)\}$ as $y \to \infty$, with $\delta$ as in \eqref{E:delta}.
\end{lem}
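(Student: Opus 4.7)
The plan is to read off the asymptotics of $U$ directly from the defining relation $\bar F(U(y)) = 1/y$ (valid up to negligible error since, for $F \in \mathcal{F}(\gamma,\tau)$, $\bar F$ is eventually continuous and strictly decreasing in a neighbourhood of infinity, or in general since the left- and right-continuous inverses coincide in regular variation up to $1+o(1)$). First I would substitute the second-order representation~\eqref{E:delta} into $\bar F(U(y)) = 1/y$ to obtain
\[
    \frac{1}{y} = C\, U(y)^{-1/\gamma} \bigl\{ 1 + \gamma^{-1} \delta(U(y)) \bigr\},
\]
which rearranges to
\[
    U(y) = C^\gamma y^\gamma \bigl\{ 1 + \gamma^{-1} \delta(U(y)) \bigr\}^\gamma.
\]
Since $\bar F \in \RV_{-1/\gamma}$ in the strong sense $x^{1/\gamma}\bar F(x) \to C$, the identity already gives $U(y)/y^\gamma \to C^\gamma$, which in particular shows $U(y) \to \infty$ and hence $\delta(U(y)) \to 0$.

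Next I would Taylor-expand $(1 + z)^\gamma = 1 + \gamma z + O(z^2)$ at $z = \gamma^{-1}\delta(U(y)) \to 0$ to conclude
\[
    U(y) = C^\gamma y^\gamma \bigl\{ 1 + \delta(U(y))(1 + o(1)) \bigr\}, \qquad y \to \infty,
\]
and identify $a(y) = \delta(U(y))(1 + o(1))$ via \eqref{E:a}. This establishes the first equality.

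For the second equality, I would use that $1 + a(y) = U(y)/(C^\gamma y^\gamma) \to 1$ together with the hypothesis $|\delta| \in \RV_\tau$. By the uniform convergence theorem for regularly varying functions applied to $|\delta|$ at any neighbourhood of $1$,
\[
    \frac{|\delta(U(y))|}{|\delta(C^\gamma y^\gamma)|}
    = \frac{|\delta(C^\gamma y^\gamma (1 + a(y)))|}{|\delta(C^\gamma y^\gamma)|}
    \longrightarrow 1^\tau = 1.
\]
Since $\delta$ is eventually of constant sign, we may drop the absolute values, giving $\delta(U(y)) = \delta(C^\gamma y^\gamma)\{1 + o(1)\}$, which chained with the first equality yields the desired conclusion.

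The only delicate point is the inversion step, i.e.\ controlling $\bar F(U(y)) - 1/y$; this is routine but is the one place where the regular-variation framework has to do some work. All remaining steps are elementary algebra combined with a single application of the uniform convergence theorem, the same result already invoked in the proof of Proposition~\ref{P:EGPD}.
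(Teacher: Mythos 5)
The paper does not actually print a proof of Lemma~\ref{L:rho}, so there is nothing to compare line by line; judged on its own merits, your argument has the right structure (invert $\bF$, expand $(1+\gamma^{-1}\delta)^\gamma$, then use the uniform convergence theorem to pass from $\delta(U(y))$ to $\delta(C^\gamma y^\gamma)$), and the first-order statement $y^{-\gamma}U(y)\to C^\gamma$ and the second equality are handled correctly.

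The one genuine soft spot is exactly the step you call ``routine'': the justification of $\bF(U(y))=1/y$ up to negligible error. Your first justification is wrong as stated: membership in ${\cal F}(\gamma,\tau)$ does \emph{not} make $\bF$ eventually continuous or strictly decreasing --- the class admits distributions with atoms, provided the induced jumps of $\delta$ are relatively small, and the paper itself signals that this is the delicate case in the sentence following the lemma (``even if $F$ is not continuous, then still $y\bF(U(y)) = 1 + o\{|a(y)|\}$''). Your fallback (``the inverses coincide up to $1+o(1)$'') is pitched at the wrong order: an error of size $o(1)$ in $y\bF(U(y))$ is useless here, because the quantity you are trying to identify, $a(y)$, is itself $o(1)$; you need the error to be $o\{|\delta(U(y))|\}$, which is a second-order statement and is precisely where the work lies. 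The gap can be closed as follows: the generalized inverse satisfies $\bF(U(y)) \leq 1/y \leq \bF(U(y)-)$, and since $\bF$ is monotone the left limit $\delta(x-)$ exists, with $\delta(x-) = \delta(x)\{1+o(1)\}$ as $x\to\infty$ --- this follows from the uniform convergence theorem applied to $|\delta|\in\RV_\tau$ with scaling factors $\lambda \to 1$, together with the eventual constancy of the sign. Sandwiching then gives $1/y = C\,U(y)^{-1/\gamma}\{1+\gamma^{-1}\delta(U(y))(1+o(1))\}$, after which your Taylor expansion goes through unchanged and also yields the paper's additional claim $y\bF(U(y)) = 1 + o\{|a(y)|\}$. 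With that repair your proof is complete.
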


In particular, $a$ is eventually nonzero and of constant sign and $|a| \in \RV_\rho$ with $\rho = \gamma \tau < 0$. In addition, even if $F$ is not continuous, then still $y \bF(U(y)) = 1 + o\{|a(y)|\}$ as $y \to \infty$.

\section{Parameter Estimation}
\label{S:par}

Our aim is to make inference on the distribution function $F$ on the
region to the right of some high, positive threshold $u$. To this
end, we assume $F \in {\cal F}(\gamma, \tau)$ and rewrite \eqref{E:P:EPD} as follows: as $u \to \infty$ and uniformly
in $y \geq 1$,
\begin{equation}
\label{E:approx}
	\bF(uy) = \bF(u) \bG_{\gamma,\delta(u),\tau}(y) + o\{ \bF(u) |\delta(u)| \}.
\end{equation}
Omitting the remainder term leads to an approximation of $\bF(x)$
for $x \geq u$ in terms of $\bF(u)$ and the EPD parameters $(\gamma,
\delta(u), \tau)$. Replacing these unknown quantities by estimates
then yields our estimate for $\bF(x)$.

The purpose of this section is to construct estimators of the E(G)PD
parameters $(\gamma, \delta(u), \tau)$. As usual in extreme value statistics, the
threshold exceedance probability $\bF(u)$ will be estimated
nonparametrically. Although the arguments leading to the estimators
will be of a heuristic nature only, the asymptotic behaviour of the
estimators will be stated and proved rigorously.

Let $X_1, \ldots, X_n$ be a random sample from $F$. In view of \eqref{E:P:EPD}, the estimates of the EPD parameters will be based on the relative excesses $X_i / u$ over $u$, for those $i \in \{1, \ldots, n\}$ such that $X_i > u$. In an extreme value asymptotic setting, the threshold $u$ needs to tend to infinity to make the approximation valid; at the same time, in a statistical context, the number of excesses over $u$ must be sufficiently large to make inference feasible. Denoting the order statistics by $X_{1:n} \leq \cdots \leq X_{n:n}$, we can ensure both criteria to be met by choosing a data-adaptive threshold $u = u_n = X_{n-k:n}$ where $k = k_n \in \{1, \ldots, n-1\}$ is an intermediate sequence of integers, that is, $k \to \infty$ and $k/n \to 0$ as $n \to \infty$. For convenience, assume $F(0) = 0$, so that all $X_i$ are positive with probability one.

Recall the tail quantile function $U$ in \eqref{E:U} and the auxiliary function $a$ in Lemma~\ref{L:rho}. In addition to $k$ being an intermediate sequence, we will assume that
\begin{equation}
\label{E:k}
  \sqrt{k} a(n/k) \to \lambda \in \mathbb{R}, \qquad n \to \infty.
\end{equation}
Writing $\delta_n = \delta(u_n) = \delta(X_{n-k:n})$, we will show later that \eqref{E:k} implies
\begin{equation}
\label{E:kdelta}
  \sqrt{k} \delta_n = \lambda + o_p(1), \qquad n \to \infty.
\end{equation}
Since in the definition of the EPD the term $x^{\tau}$ is multiplied by $\delta$, the previous display implies that the asymptotic distribution of tail estimators based on \eqref{E:approx} will not depend on the asymptotic distribution of the estimator of $\tau$, not even on its rate of convergence. Therefore, we will assume for the moment that $\tau$ (or $\rho$) is known. In the end, the unknown second-order parameters will be replaced by consistent estimators, a substitution which will be shown not to affect the asymptotic distributions of the other estimators. Note that under the regime $\sqrt{k} |a(n/k)| \to \infty$ as $n \to \infty$, which will not be considered in this paper, the asymptotic distribution of the estimator of the second-order parameter does play a role.

The estimators of $\gamma$ and $\delta_n$ will be found by maximizing an approximation to the EPD likelihood given the sample of $k$ relative excesses $X_{n-k+i:n}/X_{n-k:n}$, $i \in \{1, \ldots, k\}$, over the random threshold $X_{n-k:n}$. The density function of the EPD is given by
\[
    g_{\gamma,\delta,\tau}(x)
    = \frac{1}{\gamma} x^{-1/\gamma-1} \{1 + \delta(1-x^\tau)\}^{-1/\gamma-1} [1 + \delta\{1 - (1+\tau)x^\tau\}].
\]
The score functions admit the following expansions in $\delta \to 0$:
\begin{align*}
    \frac{\partial}{\partial \gamma} \log g_{\gamma,\delta,\tau}(x)
    &= - \frac{1}{\gamma} + \frac{1}{\gamma^2} \log x + \frac{\delta}{\gamma^2} (1 - x^\tau) + O(\delta^2), \\
    \frac{\partial}{\partial \delta} \log g_{\gamma,\delta,\tau}(x)
    &= \frac{1}{\gamma} \{ (1 - \gamma \tau) x^\tau - 1 \} \\
    & \qquad \mbox{} + \{ 1 - 2(1 - \gamma \tau)x^\tau
    + (1 - 2 \gamma \tau - \gamma \tau^2) x^{2\tau} \} \frac{\delta}{\gamma} + O(\delta^2).
\end{align*}
Define
\begin{align}
\label{E:Hill}
    H_{k,n} &= \frac{1}{k} \sum_{i=1}^k \log (X_{n-k+i:n} / X_{n-k:n}), \\
\label{E:Ekn}
    E_{k,n}(s) &= \frac{1}{k} \sum_{i=1}^k (X_{n-k+i:n} / X_{n-k:n})^s, \qquad s \leq 0.
\end{align}
Note that $H_{k,n}$ is the Hill estimator \citep{Hill75}. Assume for the moment that $\tau$ is known. Given the sample of excesses $X_{n-k+i:n}/X_{n-k:n}$, $i = 1, \ldots, k$, solving the linearized score equations yields the following equations for the pseudo-maximum likelihood estimators for $\gamma$ and $\delta$:
\begin{align}
\label{E:score:1}
    \hat{\gamma}_{k,n}
    &= H_{k,n} + \hat{\delta}_{k,n} \{ 1 - E_{k,n}(\tau) \}, \\
\label{E:score:2}
    (\hat{\gamma}_{k,n} \tau - 1) E_{k,n}(\tau) + 1
    &=
    \{1 - 2(1 - \hat{\gamma}_{n,k} \tau)E_{k,n}(\tau) \nonumber \\
    & \qquad \mbox{}
        + (1 - 2 \hat{\gamma}_{k,n} \tau - \hat{\gamma}_{k,n} \tau^2) E_{k,n}(2\tau)\} \hat{\delta}_{k,n}.
\end{align}
Substitute the expression for $\hat{\gamma}_{k,n}$ in \eqref{E:score:1} into the left-hand side of \eqref{E:score:2} and solve for $\hat{\delta}_{k,n}$ to get
\[
    \hat{\delta}_{k,n}
    = \frac{(H_{k,n} \tau - 1) E_{k,n}(\tau) + 1}{D_{k,n}}
    = \frac{H_{k,n} \tau - 1}{D_{k,n}} \left( E_{k,n}(\tau) - \frac{1}{1 - H_{k,n} \tau} \right),
\]
the denominator being
\begin{multline*}
    D_{k,n} = 1 - 2(1 - \hat{\gamma}_{n,k} \tau)E_{k,n}(\tau)
        + (1 - 2 \hat{\gamma}_{k,n} \tau - \hat{\gamma}_{k,n} \tau^2) E_{k,n}(2\tau) \\
    - \tau \{ 1 - E_{k,n}(\tau) \} E_{k,n}(\tau).
\end{multline*}

By \eqref{E:kdelta}, $\hat{\delta}_{k,n}$ can be expected to be of the order $O_p(k^{-1/2})$ as $n \to \infty$. This justifies the following simplifications. Since the distribution of relative excesses over a large threshold is approximately Pareto with shape parameter $1/\gamma$, for $s \leq 0$,
\[
    E_{k,n}(s) = \frac{1}{1 - \gamma s} + o_p(1),
    \qquad n \to \infty;
\]
see Theorem~\ref{T:simpler}. Hence, writing $\rho = \gamma \tau$, we have $E_{k,n}(\tau) = (1 - \rho)^{-1} + o_p(1)$ and $E_{k,n}(2\tau) = (1 - 2\rho)^{-1} + o_p(1)$ as $n \to \infty$, so that
\[
    D_{k,n} = - \frac{\rho^4}{\gamma (1 - 2 \rho) (1 - \rho)^2} + o_p(1), \qquad n \to \infty.
\]
This leads to the following simplified estimators:
\begin{align*}
    \hat{\delta}_{k,n}
    &= H_{k,n} (1 - 2 \rho) (1 - \rho)^3 \rho^{-4}
        \left( E_{k,n}(\tau) - \frac{1}{1 - H_{k,n} \tau} \right), \\
    \hat{\gamma}_{k,n}
    &= H_{k,n} - \hat{\delta}_{k,n} \frac{\rho}{1 - \rho}.
\end{align*}

Up to now we have assumed that $\rho$ is known. Let $\hat{\rho}_n$ be a weakly consistent estimator sequence of $\rho = \gamma \tau$; see for instance \citet{FAdHL03}, \citet{FAGdH03}, and \citet{PengQi04}. Replace $\tau$, which is unknown, by $\hat{\tau}_{k,n} = \hat{\rho}_n / H_{k,n}$, to finally get
\begin{align}
\label{E:deltakn}
    \hat{\delta}_{k,n} &=
    H_{k,n} (1 - 2 \hat{\rho}_n) (1 - \hat{\rho}_n)^3 \hat{\rho}_n^{-4}
    \left( E_{k,n}(\hat{\rho}_n / H_{k,n}) - \frac{1}{1 - \hat{\rho}_n} \right), \\
\label{E:gammakn}
    \hat{\gamma}_{k,n} &=
    H_{k,n} - \hat{\delta}_{k,n} \frac{\hat{\rho}_n}{1 - \hat{\rho}_n}.
\end{align}
Further, put
\begin{equation}
\label{E:Zkn}
    Z_{k,n} = \sqrt{k} \{ n \bF(X_{n-k:n}) / k - 1 \}.
\end{equation}
The joint asymptotics of $Z_{k,n}$ with $(\hat{\gamma}_{k,n}, \hat{\delta}_{k,n})$ will become relevant in Section~\ref{S:prob} when estimating tail probabilities on the basis of \eqref{E:approx} with $u = X_{n-k:n}$. Let the arrow $\dto$ denote convergence in distribution.

\begin{theorem}
\label{T:estim}
Let $F \in {\cal F}(\gamma, \tau)$ and let $X_1, \ldots, X_n$ be independent random variables with common distribution function $F$. Let $k = k_n$ be an intermediate sequence satisfying \eqref{E:k}. Recall $\delta_n = \delta(X_{n-k:n})$ and $Z_{k,n}$ in \eqref{E:Zkn}. If $\hat{\rho}_n = \rho + o_p(1)$ as $n \to \infty$, with $\rho = \gamma \tau$, then $\sqrt{k} \delta_n = \lambda + o_p(1)$ as $n \to \infty$ and
\begin{equation}
\label{E:estim}
    \Bigl( \sqrt{k} (\hat{\gamma}_{k,n} - \gamma), \sqrt{k} (\hat{\delta}_{k,n} - \delta_n), Z_{k,n} \Bigr)
    \dto
    N_3(\boldsymbol{0}, \Sigma), \qquad n \to \infty,
\end{equation}
a trivariate normal distribution with mean vector zero and covariance matrix
\begin{equation}
\label{E:Sigma}
    \Sigma =
    \left(
        \begin{array}{llc}
             \phantom{-}\gamma^2 \frac{(1-\rho)^2}{\rho^2} &  -\gamma^2 \frac{(1-2\rho)(1-\rho)}{\rho^3} & 0 \\[1ex]
             -\gamma^2 \frac{(1-2\rho)(1-\rho)}{\rho^3} &  \phantom{-}\gamma^2 \frac{(1-2\rho)(1-\rho)^2}{\rho^4} & 0 \\[1ex]
            \phantom{-}0      & \phantom{-}0      & 1
        \end{array}
    \right).
\end{equation}
\end{theorem}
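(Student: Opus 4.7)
The plan is to reduce the theorem to joint weak convergence of three building blocks---the Hill estimator $H_{k,n}$, the tail empirical moments $E_{k,n}(s)$ near $s=\tau$ and $s=2\tau$, and $Z_{k,n}$---and then to invoke the continuous mapping / delta method on the formulas \eqref{E:deltakn}--\eqref{E:gammakn}. The preliminary statement $\sqrt{k}\delta_n = \lambda + o_p(1)$ follows from Lemma~\ref{L:rho}: since $X_{n-k:n}/U(n/k) \to 1$ in probability for any intermediate sequence $k$ and $|\delta|\in\RV_\tau$ with $\tau<0$, the ratio $\delta(X_{n-k:n})/\delta(U(n/k))$ tends to $1$ in probability; Lemma~\ref{L:rho} identifies $\delta(U(n/k))$ with $a(n/k)\{1+o(1)\}$, so \eqref{E:k} gives $\sqrt{k}\delta_n \to \lambda$ in probability.

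Next, I apply the auxiliary tail empirical process result of the appendix to the Pareto-transformed exceedances, yielding, uniformly for $s$ in a compact neighbourhood of $\{\tau,2\tau\}$, an expansion
\[
    \sqrt{k}\{E_{k,n}(s) - (1-\gamma s)^{-1}\} = L_n(s) + \lambda b(s) + o_p(1),
\]
with $L_n(\cdot)$ converging weakly to a centred Gaussian process and $b(s)$ an explicit coefficient coming from the second-order condition. Analogous expansions $\sqrt{k}(H_{k,n}-\gamma) = L_n^H + \lambda b^H + o_p(1)$ and $Z_{k,n} = L_n^Z + o_p(1)$ hold jointly with the previous one. Asymptotic independence of $L_n^Z$ from the remaining Gaussian components is inherited from the conditional independence, given $X_{n-k:n}$, of $\bF(X_{n-k:n})$ and the relative excesses above that threshold.

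Using that $\hat\rho_n$ and $H_{k,n}$ are consistent for $\rho$ and $\gamma$ respectively, $\hat\tau_{k,n}=\hat\rho_n/H_{k,n} \to \tau$ in probability. Uniformity in $s$ from the previous step combined with the smoothness of $s\mapsto(1-\gamma s)^{-1}$ yields
\[
    E_{k,n}(\hat\tau_{k,n}) = E_{k,n}(\tau) + \frac{\gamma}{(1-\rho)^2}(\hat\tau_{k,n}-\tau) + o_p(k^{-1/2}),
\]
and analogously for $E_{k,n}(2\hat\tau_{k,n})$; the rational coefficients in \eqref{E:deltakn}--\eqref{E:gammakn}, being continuous in $\hat\rho_n$ at $\rho$, may be evaluated at $\rho$ modulo $o_p(1)$ errors. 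Substitution and the delta method then express $\sqrt{k}(\hat\delta_{k,n}-\delta_n)$ and $\sqrt{k}(\hat\gamma_{k,n}-\gamma)$ as linear combinations of $L_n(\tau)$, $L_n(2\tau)$ and $L_n^H$. By design of the pseudo-ML score equations, the $\lambda$-dependent bias contributions cancel exactly, so the limits are centred, and a direct covariance calculation reproduces the upper $2\times 2$ block of $\Sigma$; the zero covariances with $L_n^Z$ deliver the third row and column.

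The main obstacle is justifying the replacement of $\tau$ by the data-dependent $\hat\tau_{k,n}$ inside $E_{k,n}(\cdot)$ without any rate assumption on $\hat\rho_n$. This is precisely where uniformity in $s$ of the tail empirical process expansion is indispensable; once that uniformity is established, what remains is routine Taylor expansion and covariance bookkeeping.
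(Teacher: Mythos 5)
Your overall architecture is the same as the paper's: a joint functional limit theorem for $\bigl(\sqrt{k}(H_{k,n}-\gamma),\,\sqrt{k}\{E_{k,n}(\cdot)-(1-\gamma\,\cdot)^{-1}\},\,Z_{k,n}\bigr)$ in $\RR\times\mathcal{C}[s_0,0]\times\RR$ (the paper's Theorem~\ref{T:simpler}), consistency of $\hat\tau_{k,n}=\hat\rho_n/H_{k,n}$, and then continuous mapping applied to \eqref{E:deltakn}--\eqref{E:gammakn}; your argument for $\sqrt{k}\delta_n=\lambda+o_p(1)$ and for the asymptotic independence of the $Z$-component are also essentially the paper's. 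The genuine gap sits exactly at the step you flag as the main obstacle, and uniformity in $s$ does not resolve it. Uniformity (asymptotic equicontinuity) controls only the stochastic fluctuation, giving $E_{k,n}(\hat\tau_{k,n})=E_{k,n}(\tau)+\{(1-\gamma\hat\tau_{k,n})^{-1}-(1-\gamma\tau)^{-1}\}+o_p(k^{-1/2})$. Your claimed expansion $E_{k,n}(\hat\tau_{k,n})=E_{k,n}(\tau)+\gamma(1-\rho)^{-2}(\hat\tau_{k,n}-\tau)+o_p(k^{-1/2})$ is false in general: the Taylor remainder is $O_p(|\hat\tau_{k,n}-\tau|^2)$, and since only $\hat\rho_n=\rho+o_p(1)$ is assumed (no rate), this need not be $o_p(k^{-1/2})$. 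Similarly, the remark that the rational coefficients in \eqref{E:deltakn}--\eqref{E:gammakn} may be evaluated at $\rho$ up to $o_p(1)$ is harmless for the multiplicative prefactor, but it cannot be applied to the centering term $(1-\hat\rho_n)^{-1}$ inside the bracket, which gets multiplied by $\sqrt{k}$; an uncontrolled $o_p(1)$ error there is fatal. As written, the convergence of $\sqrt{k}\bigl\{E_{k,n}(\hat\tau_{k,n})-(1-\hat\rho_n)^{-1}\bigr\}$ is therefore not established.

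The missing ingredient is the exact algebraic link $\hat\rho_n=H_{k,n}\hat\tau_{k,n}$, which the paper exploits: on the set where the quantities are well defined,
\begin{equation*}
  \frac{1}{1-\gamma\hat\tau_{k,n}}-\frac{1}{1-\hat\rho_n}
  \;=\;-\,\frac{\hat\tau_{k,n}\,(H_{k,n}-\gamma)}{(1-\gamma\hat\tau_{k,n})(1-\hat\rho_n)}
  \;=\;-\,\frac{\hat\tau_{k,n}}{(1-\gamma\hat\tau_{k,n})(1-\hat\rho_n)}\,\frac{\Gamma_{k,n}}{\sqrt{k}},
\end{equation*}
so the deterministic discrepancy is \emph{exactly} of order $k^{-1/2}$ with a coefficient that is merely a continuous function of the consistent quantities, converging to $\rho/\{\gamma(1-\rho)^2\}$; no Taylor expansion in $\hat\rho_n-\rho$, and hence no rate, is needed. (If you insist on expanding both $E_{k,n}(\hat\tau_{k,n})$ around $\tau$ and $(1-\hat\rho_n)^{-1}$ around $\rho$, you must exhibit the cancellation of the linear and higher-order discrepancies; it does hold, because $\gamma(\hat\tau_{k,n}-\tau)-(\hat\rho_n-\rho)=-\hat\tau_{k,n}(H_{k,n}-\gamma)=O_p(k^{-1/2})$ and the quadratic difference factors through this same quantity---but that cancellation, not uniformity in $s$, is what makes the plug-in work.) Two minor points: $E_{k,n}(2\tau)$ does not appear in the estimators \eqref{E:deltakn}--\eqref{E:gammakn} covered by the theorem, so that part of your setup is unnecessary; and for $\hat\delta_{k,n}$ the $\lambda$-bias does not cancel ``by design''---the limit of $\sqrt{k}\,\hat\delta_{k,n}$ has mean $\lambda$, which is offset by the centering $\sqrt{k}\,\delta_n\to\lambda$ supplied by your preliminary step, whereas the exact cancellation you describe occurs only in the $\hat\gamma_{k,n}$ combination.
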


An asymptotic confidence interval for $\gamma$ of nominal level $1 - \alpha$ is given by
\begin{equation}
\label{E:CI:gamma}
	\biggl[ 
		\hat{\gamma}_{k,n} \biggl( 1 + \frac{1 - \hat{\rho}_n}{\hat{\rho}_n} \frac{z_{\alpha/2}}{\sqrt{k}} \biggr), \;
		\hat{\gamma}_{k,n} \biggl( 1 - \frac{1 - \hat{\rho}_n}{\hat{\rho}_n} \frac{z_{\alpha/2}}{\sqrt{k}} \biggr)
	\biggr],
\end{equation}
with $z_{\alpha/2}$ the $1 - \alpha/2$ quantile of the standard normal distribution.

The proof of Theorem~\ref{T:estim} is given in
Appendix~\ref{A:estim}. It is based on a functional central limit
theorem for a certain tail empirical process, stated and proved in
Appendix~\ref{A:simpler}. Note that the
asymptotic distribution of $\hat{\rho}_{k,n}$ is unimportant; the
only requirement is that the estimator is consistent for $\rho$.

The fact that the limit distribution in \eqref{E:estim} is centered for any $\lambda$, is important for two reasons:
\begin{itemize}
\item[\bf 1] It makes possible the use of larger $k$ and thus of lower thresholds compared to when the mean would be proportional to $\lambda$. In this way, the model can be fitted to a larger fraction of the data, leading to a reduction of the asymptotic variances and thus of the asymptotic mean squared errors of the parameter estimates.
\item[\bf 2] Sample paths of the estimates as a function of $k$ will exhibit larger regions of stability around the true value. As a consequence, the choice of $k$ becomes easier.
\end{itemize}
These issues will be illustrated in the simulations in Section~\ref{S:compar} and in the case study in Example~\ref{Ex:secura}.

\section{Comparison of Extreme Value Index Estimators}
\label{S:compar}

Under the conditions of Theorem~\ref{T:estim}, we have
\begin{equation}
\label{E:AN:EPD}
    \sqrt{k} (\hat{\gamma}_{k,n} - \gamma)
    \dto N \left( 0, \gamma^2 \frac{(1-\rho)^2}{\rho^2} \right),
    \qquad n \to \infty.
\end{equation}
According to \citet{Drees98}, the asymptotic variance is minimal for scale-invariant, asymptotically unbiased estimators of $\gamma$ of a certain form. The limit distribution in \eqref{E:AN:EPD} corresponds with the one of the estimators in \citet{BDGM99}, \citet{FeuervergerHall99} and \citet{GM02}.

The maximum likelihood estimator for $\gamma$ arises from fitting
the GPD to the excesses $X_{n-k+i:n} - X_{n-k:n}$, $i = 1, \ldots,
k$. Its asymptotics have been studied in \citet{Smith87},
\citet{DFdH04} and \citet[Theorem~3.4.2]{dHF}. From the latter
theorem, it follows that under the conditions of our
Theorem~\ref{T:estim}, we have
\begin{equation}
\label{E:AN:MLE}
    \sqrt{k} \left( \hat{\gamma}_{k,n}^{\text{GPD}} - \gamma \right)
    \dto N \left( \lambda b(\gamma, \rho), (1 + \gamma)^2 \right),
    \qquad n \to \infty,
\end{equation}
where
\[
    b(\gamma, \rho) = \frac{\rho (1 + \gamma) (\gamma + \rho)}{\gamma (1 - \rho) (1 + \gamma - \rho)}.
\]
Comparing \eqref{E:AN:EPD} and \eqref{E:AN:MLE}, we see that if $\tau = -1$ and thus $\rho = - \gamma$, the asymptotic distributions of $\hat{\gamma}_{k,n}$ and $\hat{\gamma}_{k,n}^{\text{GPD}}$ coincide. This is in correspondance with the fact that the EGPD with $\tau = -1$ is a reparametrization of the GPD and the fact that the EPD estimators were obtained by solving the linearized score equations.

Finally, under the conditions of Theorem~\ref{T:estim}, the asymptotic distribution of the Hill estimator is
\begin{equation}
\label{E:AN:Hill}
    \sqrt{k} (H_{k,n} - \gamma)
    \dto N \left( \lambda \frac{\rho}{1-\rho}, \gamma^2 \right),
    \qquad n \to \infty;
\end{equation}
see for instance Theorem~\ref{T:simpler} below. Of the three estimators considered, the Hill estimator has the smallest asymptotic variance. Unless $\lambda = 0$, however, its asymptotic bias is never zero. The asymptotic distribution of the Hill estimator and its optimal variance property are of course well known; see for instance \citet[Section~9.4]{Reiss89}, \citet{Drees98} and \citet{BBW06}.

To illustrate the behavior of the three estimators, we generated samples from four different distributions. For each distribution, we generated $10,000$ samples of size $n = 1,000$ and computed the three extreme value index estimators for $k$ up to $500$. For the EPD estimator, we estimated the second-order parameter $\rho$ using the estimator in \citet{FAGdH03}. For each distribution and each estimator, we computed Monte Carlo estimates of the bias, variance and mean squared error by averaging out over the $10,000$ samples. 

Comparing the asymptotic results to the graphs in Figures~\ref{F:gamma:1}--\ref{F:gamma:2} we learn the following:
\begin{description}
\item[\it Fr\'echet distribution] with $\alpha = 1$. We have $\gamma = 1/\alpha = 1$, $\tau = -\alpha = -1$, and $\rho = \gamma \tau = -1$. From \eqref{E:AN:EPD} and \eqref{E:AN:MLE}, it follows that the asymptotic distributions of the EPD and the GPD estimators coincide, with zero asymptotic bias and an asymptotic variance of $4/k$. The Hill estimator has an asymptotic variance of $1/k$ only, but its asymptotic bias is nonzero. 

\item[\it Student \emph{t} distribution] with $\nu = 4$. We have $\gamma = 1/\nu = 1/4$, $\tau = -2$, and $\rho = \gamma \tau = -1/2$. The asymptotic variances of the three estimators are $\sigma^2 / k$ with $\sigma^2 = \gamma^2 = 1/16$ for the Hill estimator, $\sigma^2 = \gamma^2 (1 - \rho)^2 / \rho^2 = 9/16$ for the EPD estimator, and $\sigma^2 = (1 + \gamma)^2 = 25/16$ for the GPD estimator. Of the three estimators, the EPD estimator is the only one which is asymptotically unbiased. 

\item[\it Pareto mixture distribution] defined by $\bF(x) = (1+c)^{-1} x^{-\alpha} (1 + c x^{-\alpha})$, $x \geq 1$, with shape parameter $\alpha = 2$ and mixing parameter $c = 2$. We have $\gamma = 1/\alpha = 1/2$, $\tau = -\alpha = -2$, and $\rho = \gamma \tau = -1$. The weight of the second-order component is equal to $c = 2$ times the weight of the first-order component, inducing a severe bias to the Hill and GPD estimators; the EPD estimator is much less affected by this.  The asymptotic variances of the three estimators are $\sigma^2 / k$ with $\sigma^2 = \gamma^2 = 1/4$ for the Hill estimator, $\sigma^2 = \gamma^2 (1 - \rho)^2 / \rho^2 = 1$ for the EPD estimator, and $\sigma^2 = (1 + \gamma)^2 = 9/4$ for the GPD estimator. 

\item[\it Loggamma distribution] with shape parameter $\alpha = 4$ and scale parameter $\beta = 2$. Although this distribution has positive extreme-value index $\gamma = 1/\beta$, it is not in any of the classes ${\cal F}(\gamma, \tau)$, since $\bF(x) \sim \text{constant} \times x^{-1/\beta} (\log x)^{\alpha - 1}$. Nevertheless, the EPD estimator performs reasonably well when compared to the Hill and GPD estimators. 
\end{description}

\begin{figure}
\begin{center}
\begin{tabular}{cc}
\includegraphics[width = 0.45\textwidth, height = 0.28\textheight]{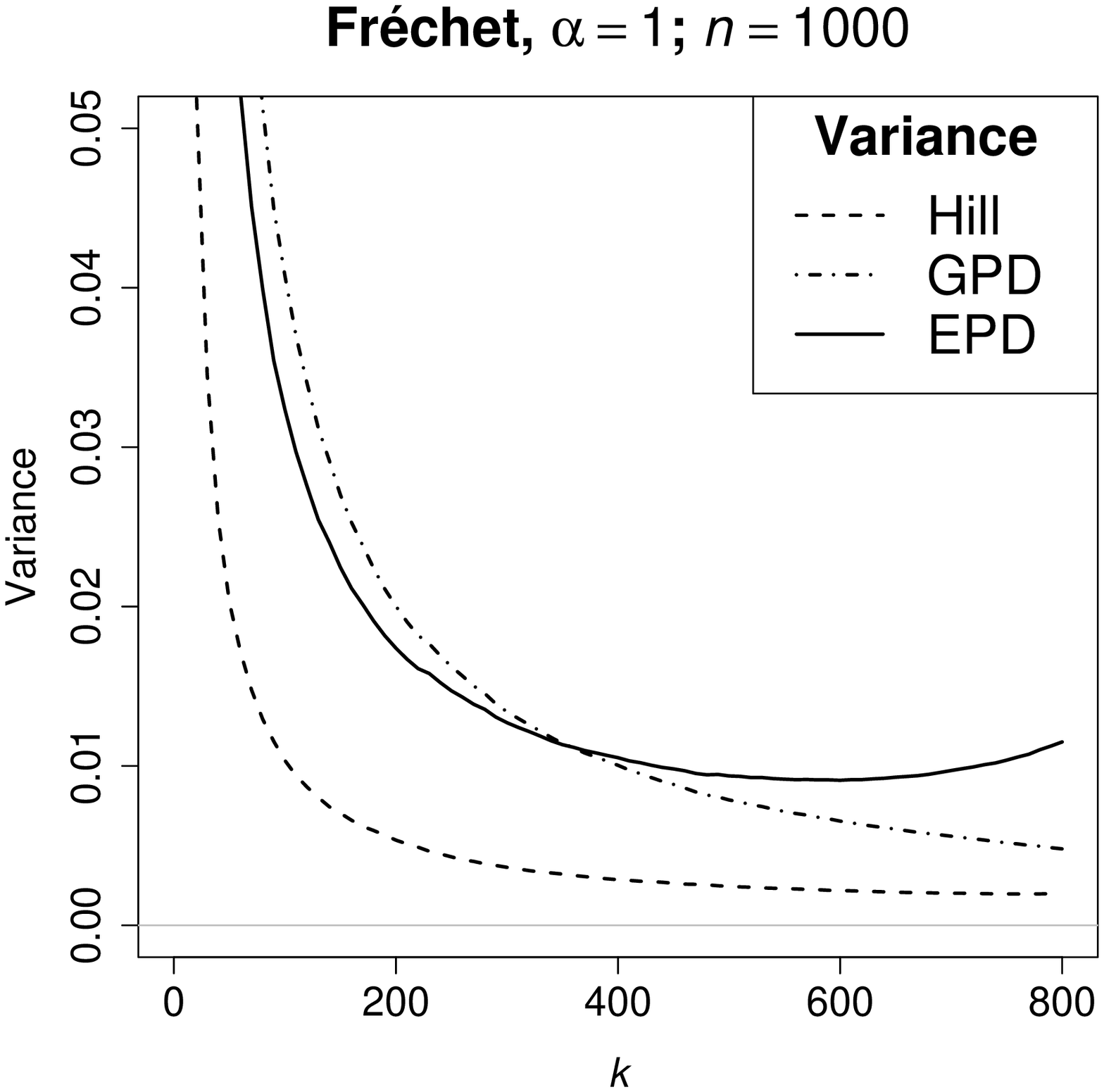} &
\includegraphics[width = 0.45\textwidth, height = 0.28\textheight]{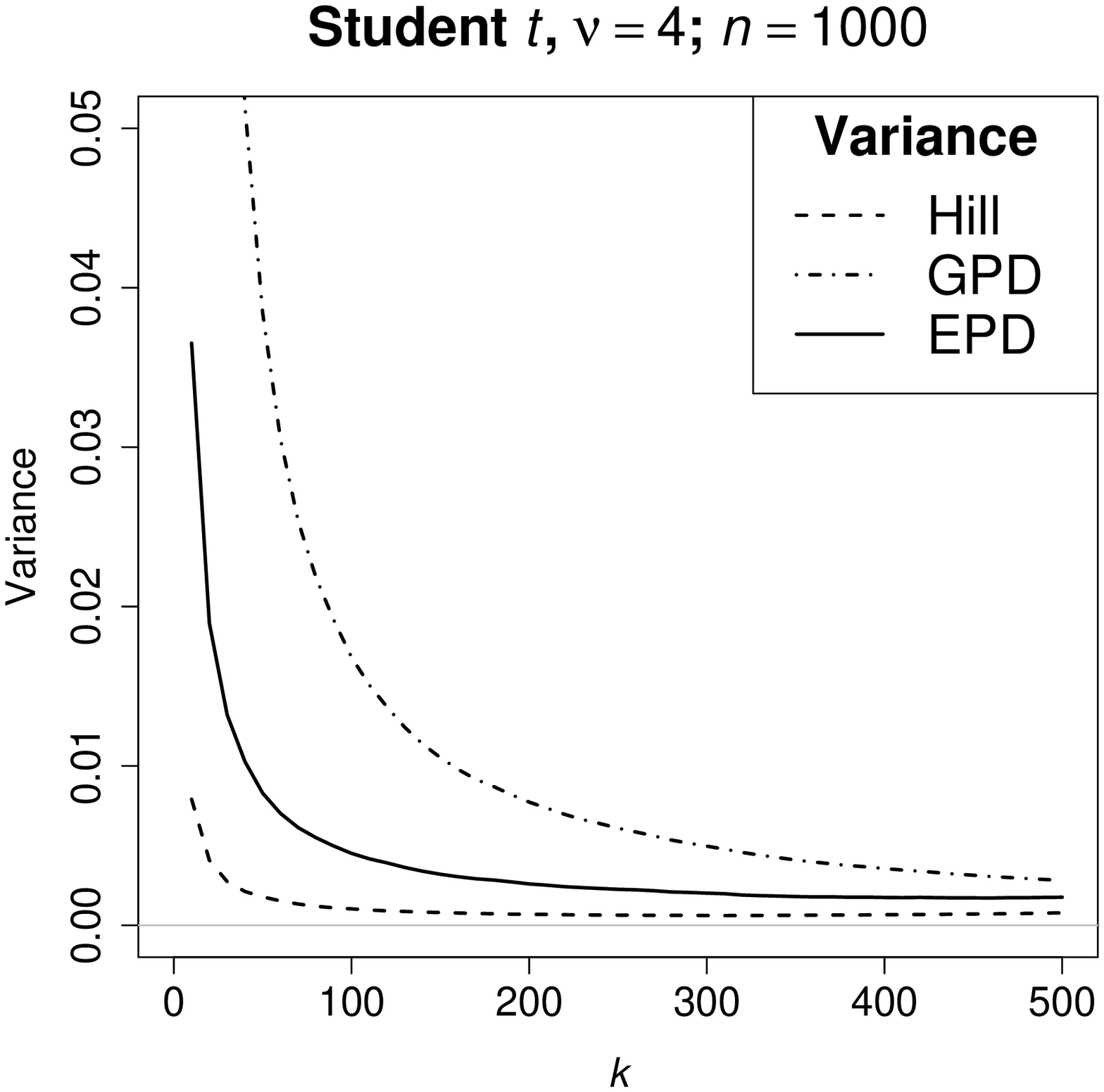} \\
\includegraphics[width = 0.45\textwidth, height = 0.28\textheight]{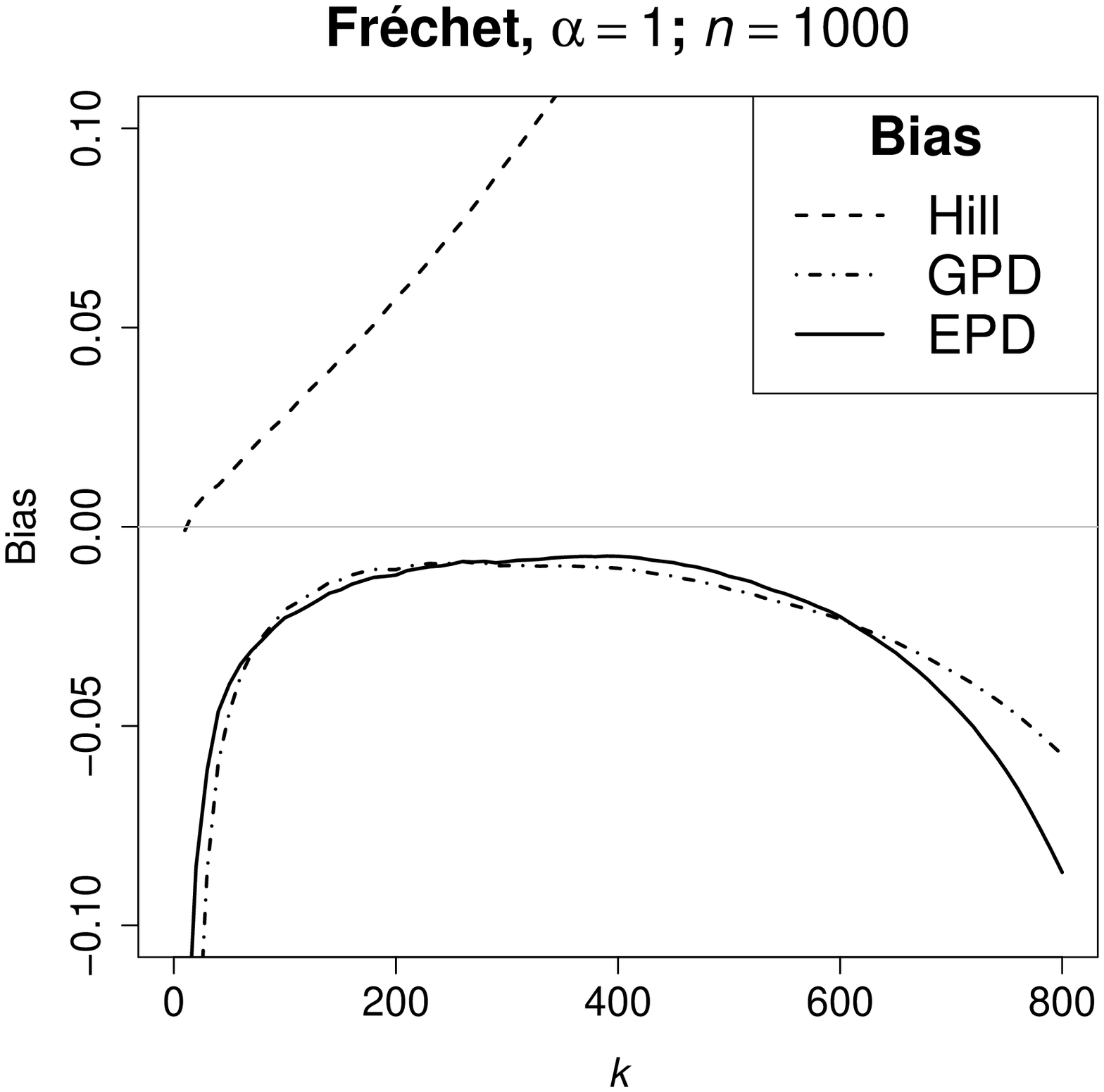} &
\includegraphics[width = 0.45\textwidth, height = 0.28\textheight]{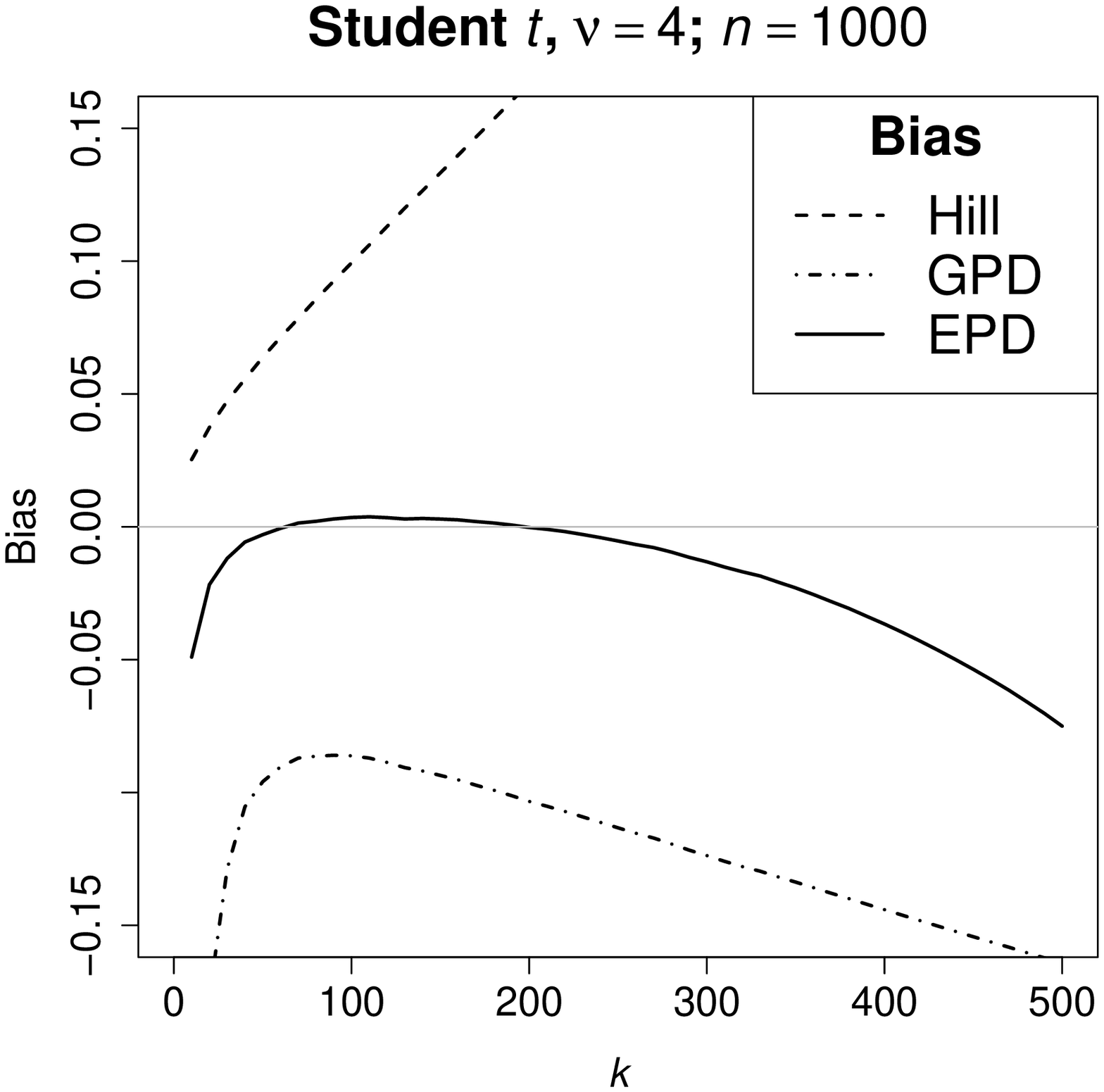} \\
\includegraphics[width = 0.45\textwidth, height = 0.28\textheight]{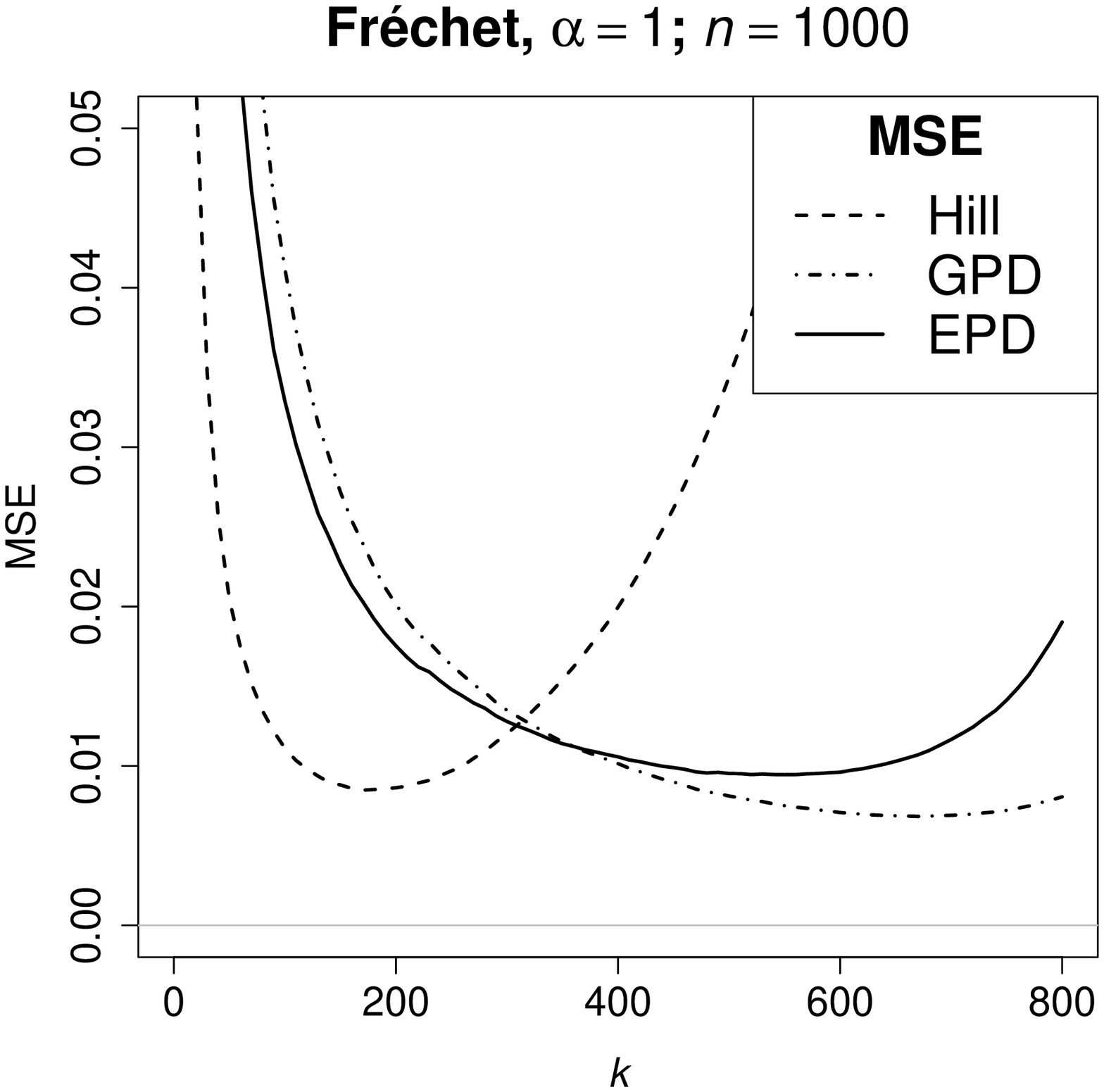} &
\includegraphics[width = 0.45\textwidth, height = 0.28\textheight]{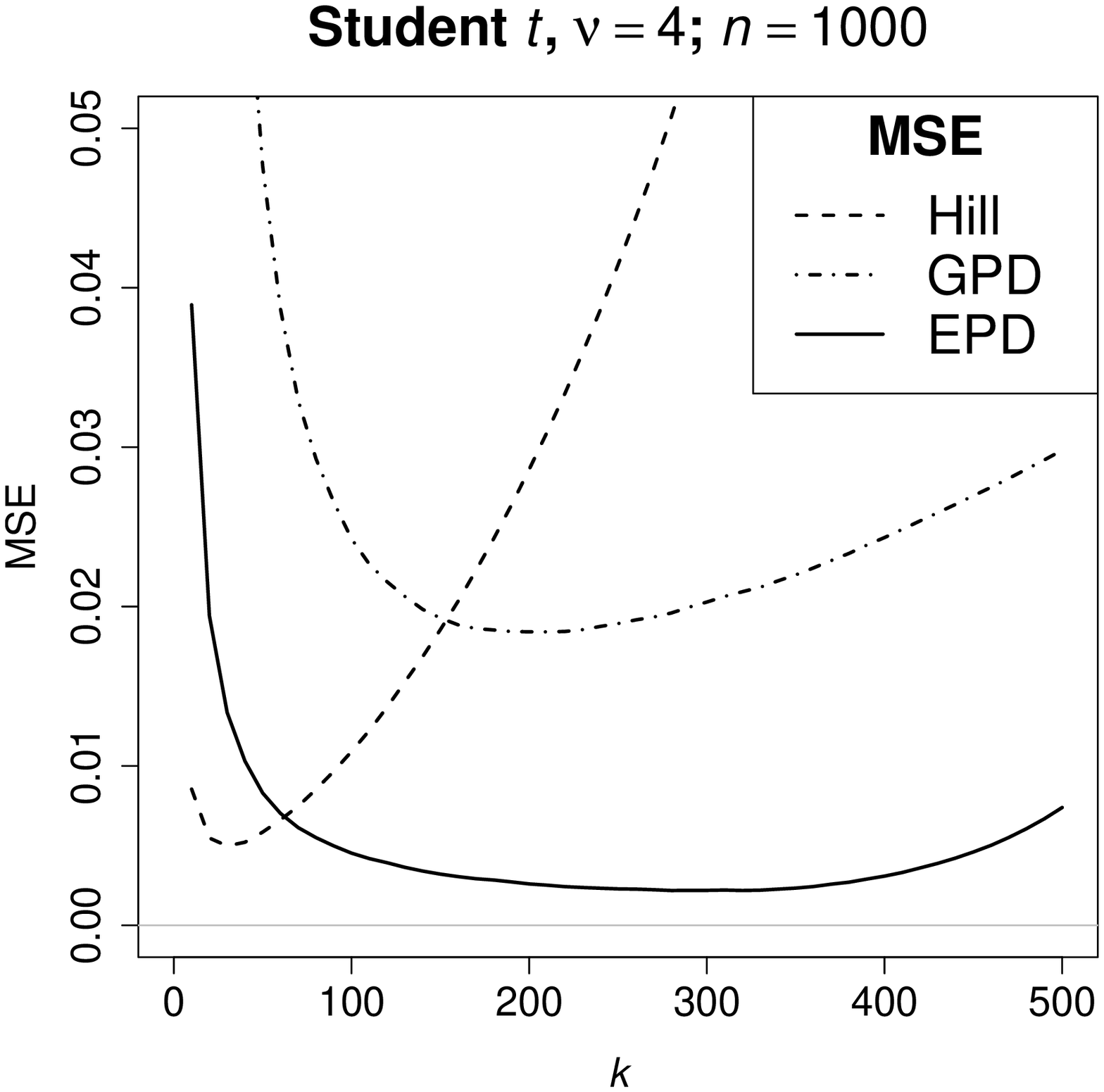}
\end{tabular}
\caption{\it Variance (top), bias (middle), and mean squared error
(bottom) of the Hill (dashed), GPD (dotdashed),
and EPD (solid) estimator in case of the unit Fr\'echet distribution
(left) and the Student t distribution with $\nu = 4$ degrees of
freedom (right). The sample size was $n = 1,000$ and the plots were
obtained by averaging out over $10,000$ samples.} \label{F:gamma:1}
\end{center}
\end{figure}

\begin{figure}
\begin{center}
\begin{tabular}{cc}
\includegraphics[width = 0.45\textwidth, height = 0.28\textheight]{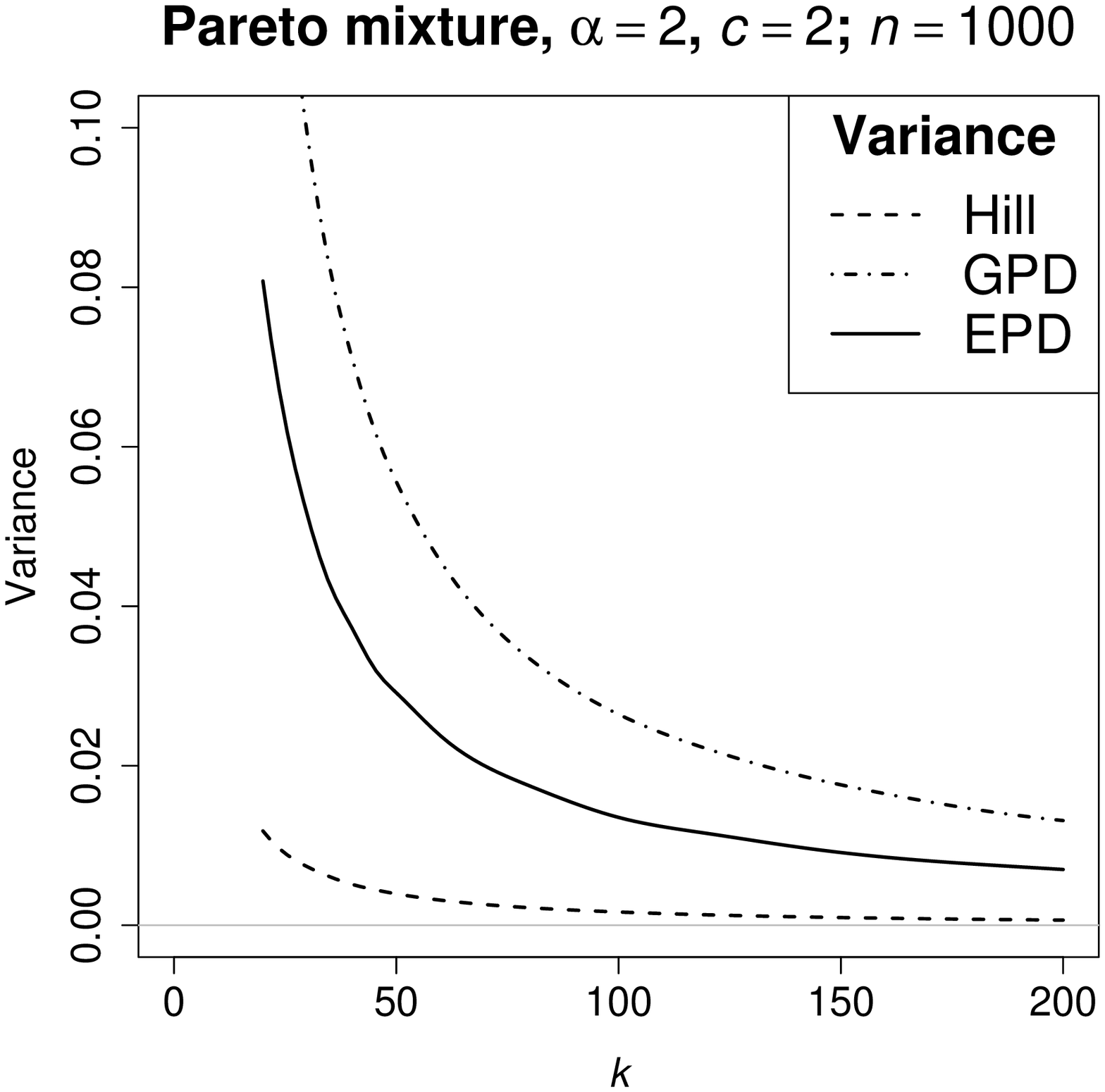} &
\includegraphics[width = 0.45\textwidth, height = 0.28\textheight]{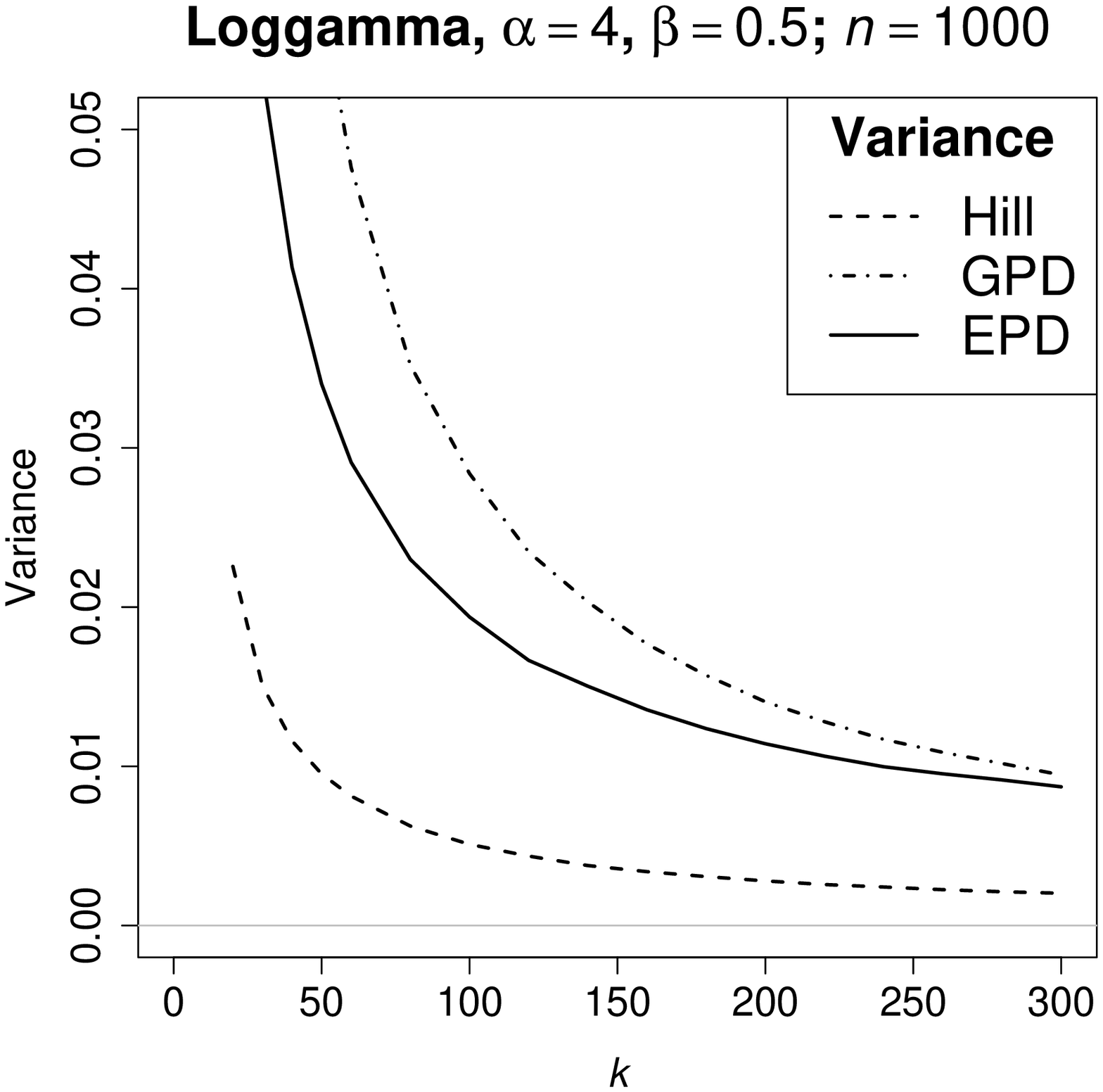} \\
\includegraphics[width = 0.45\textwidth, height = 0.28\textheight]{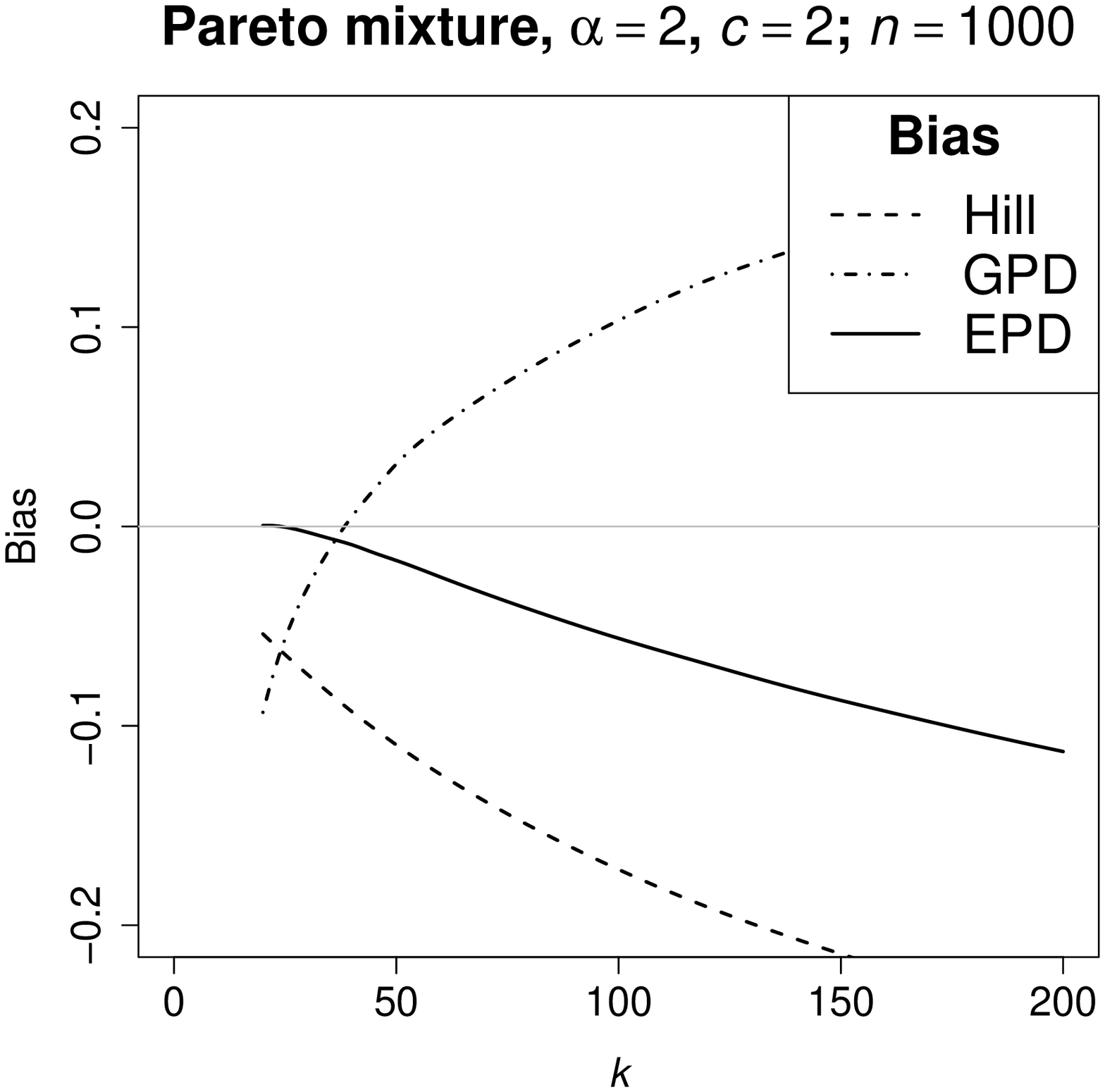} &
\includegraphics[width = 0.45\textwidth, height = 0.28\textheight]{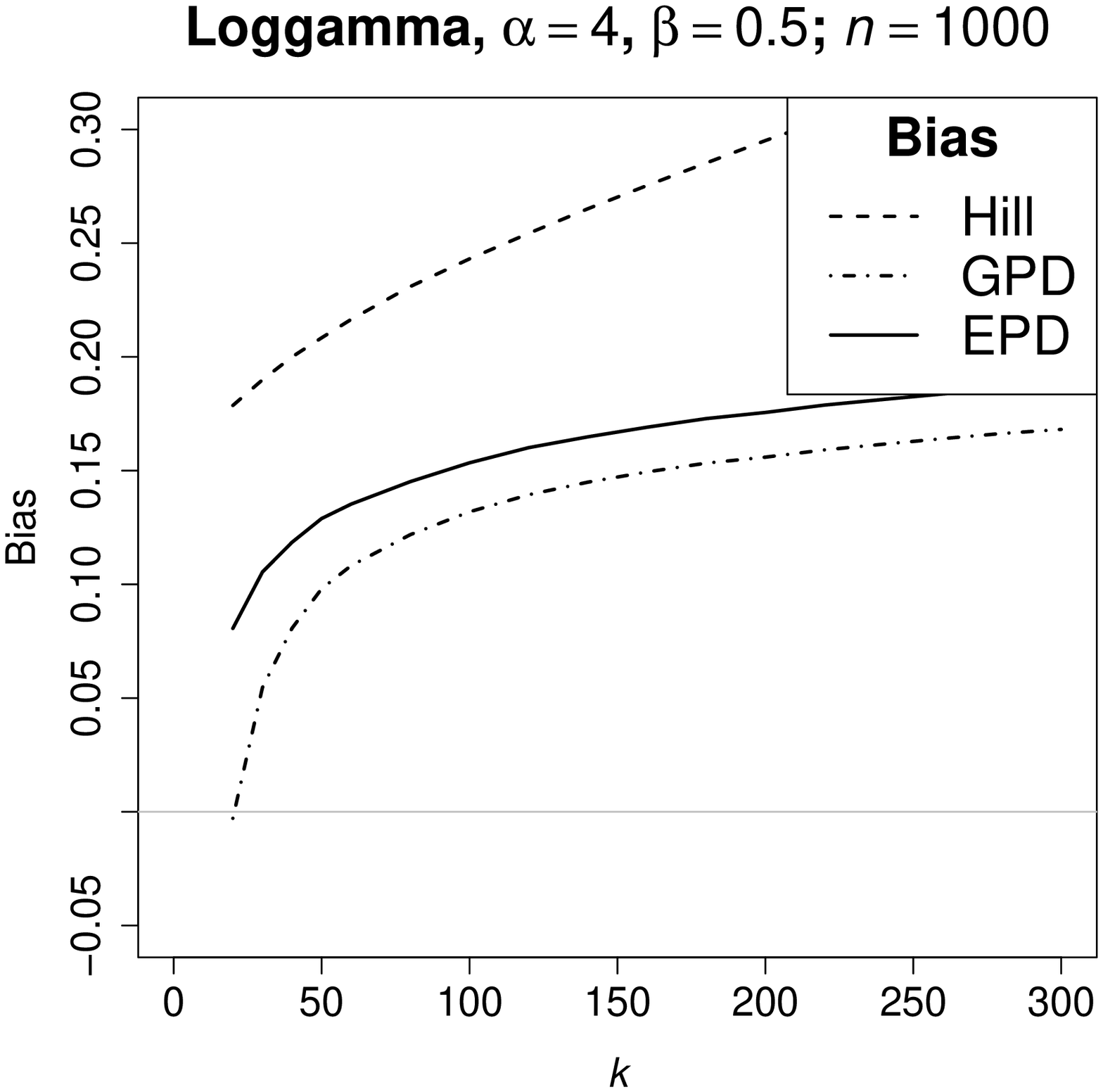} \\
\includegraphics[width = 0.45\textwidth, height = 0.28\textheight]{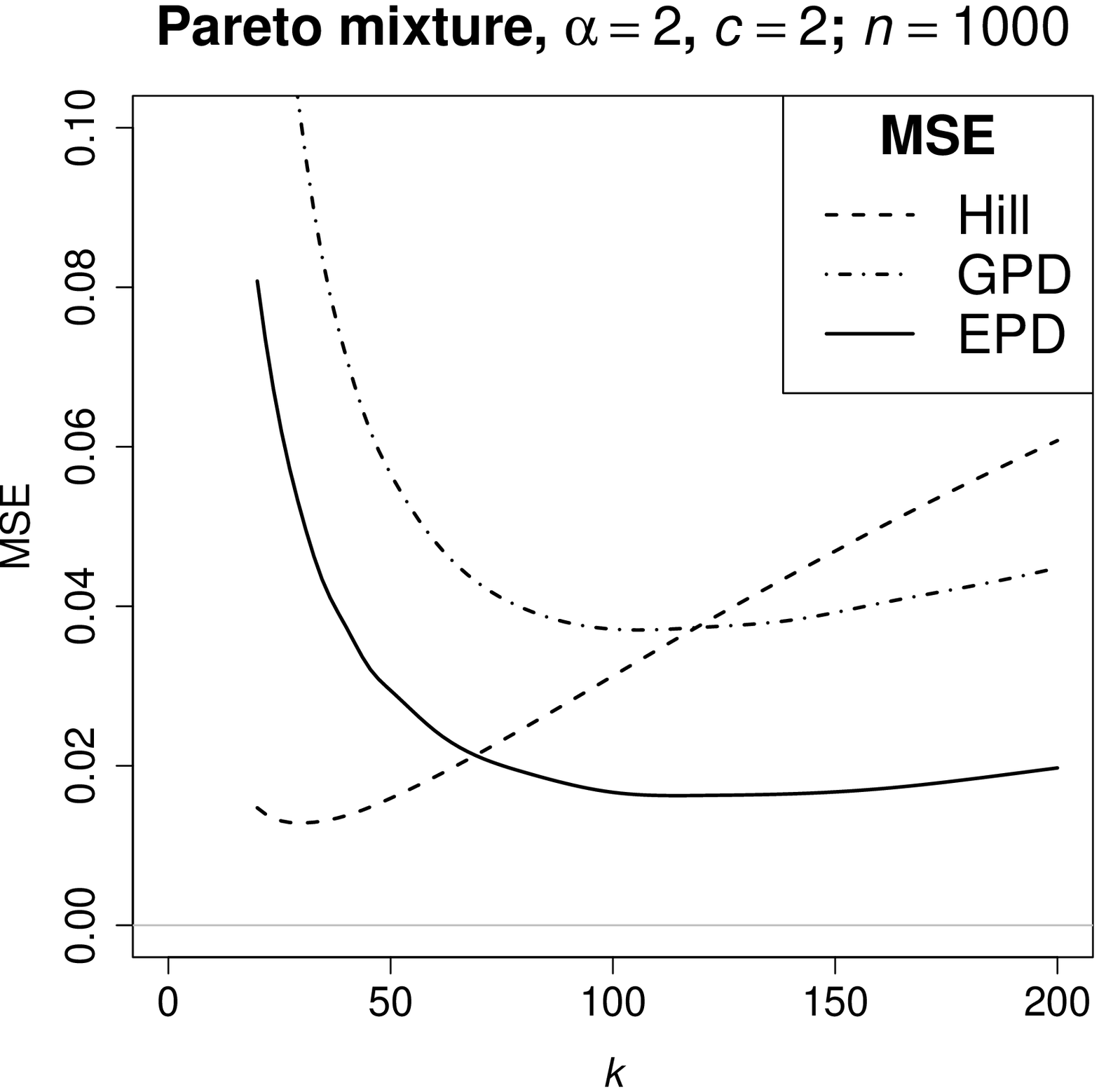} &
\includegraphics[width = 0.45\textwidth, height = 0.28\textheight]{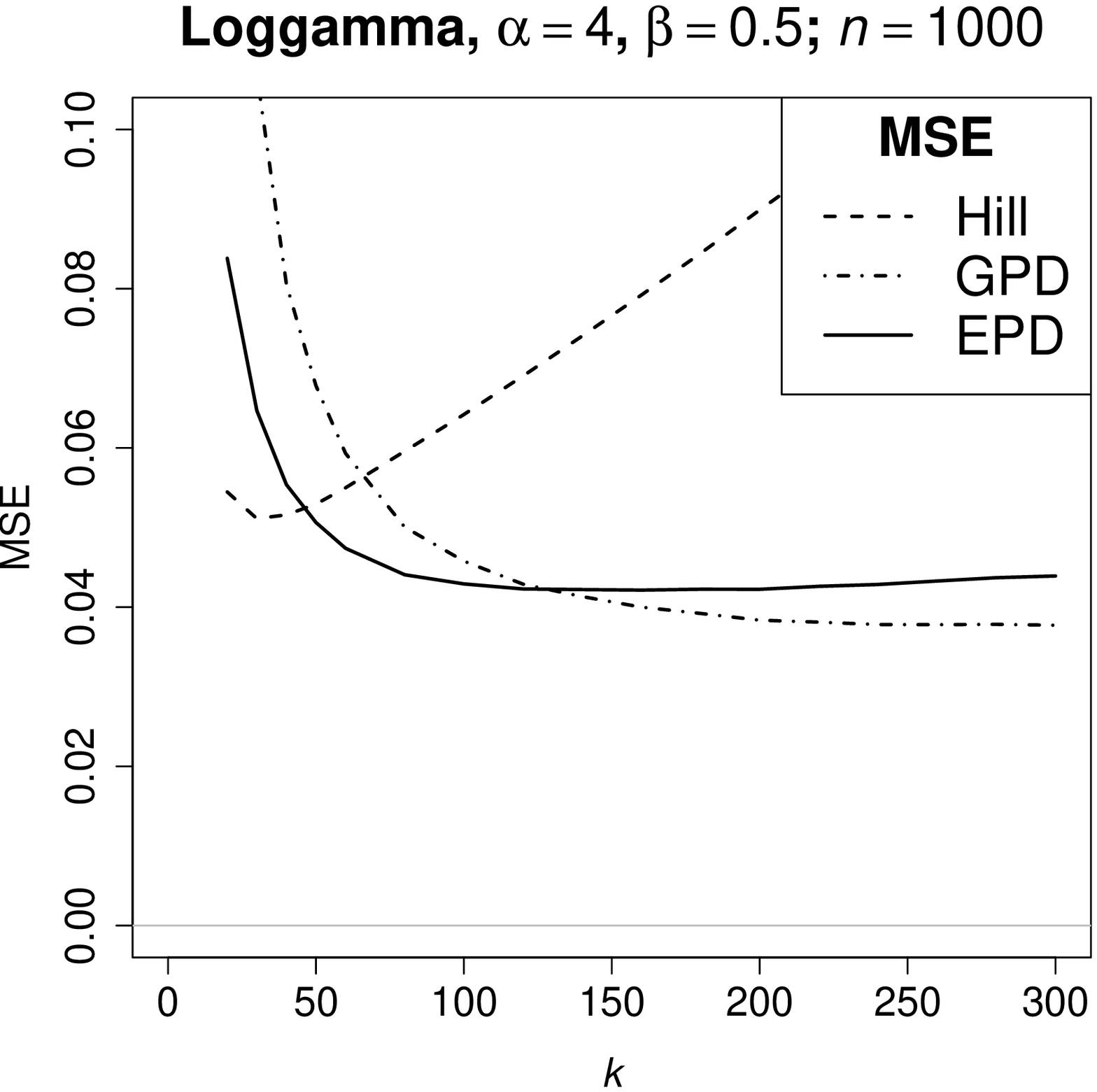}
\end{tabular}
\caption{\it Variance (top), bias (middle), and mean squared error (bottom) of the Hill (dashed), GPD (dotdashed), and EPD (solid) estimator in case of a Pareto mixture distribution ($\alpha = 2$, $c = 2$; left) and a Loggamma distribution ($\alpha = 4$, $\beta = 2$; right). The sample size was $n = 1,000$ and the plots were obtained by averaging out over $10,000$ samples.} \label{F:gamma:2}
\end{center}
\end{figure}

\section{Tail Probability Estimation}
\label{S:prob}

Let us return to the tail estimation problem raised in the beginning
of Section~\ref{S:par}. Given the order statistics $X_{1:n} \leq
\cdots \leq X_{n:n}$ of an independent sample from an unknown
distribution function $F \in {\cal F}(\gamma, \tau)$, we want to
estimate the tail probability $p_n = \bF(x_n)$, where $x_n \to
\infty$ and thus $p_n \to 0$ as $n \to \infty$. As before, let $k =
k_n \in \{1, \ldots, n-1\}$ be an intermediate integer sequence,
that is, $k \to \infty$ and $k/n \to 0$. Assume that $p_n =
\bF(x_n)$ satisfies
\begin{equation}
\label{E:pn}
    n p_n / k \to q \in [0, 1), \qquad n \to \infty.
\end{equation}
Let $\hat{\gamma}_n$, $\hat{\delta}_n$, and $\hat{\tau}_n$ denote general estimator sequences and put
$\delta_n = \delta(X_{n-k:n})$ as well as
\begin{equation}
\label{E:GammaDelta}
    \Gamma_{k,n} = \sqrt{k} (\hat{\gamma}_n - \gamma)
    \qquad \text{and} \qquad
    \Delta_{k,n} = \sqrt{k} (\hat{\delta}_n - \delta_n).
\end{equation}
Recall $Z_{k,n}$ in \eqref{E:Zkn} and assume that
\begin{equation}
\label{E:joint}
    \hat{\tau}_n = \tau + o_p(1)
    \quad \text{and} \quad
    (\Gamma_{k,n}, \Delta_{k,n}, Z_{k,n}) \dto (\Gamma, \Delta, Z), \qquad n \to \infty,
\end{equation}
a trivariate random vector. A possible choice for the estimators of $\gamma$ and
$\delta_n$ are the ones studied in Theorem~\ref{T:estim}. However,
we will formulate our results so as to allow for general estimator
sequences satisfying \eqref{E:joint}. For the estimator of $\tau$,
one can for instance take $\hat{\tau}_n = \hat{\rho}_n /
\hat{\gamma}_n$, where $\hat{\rho}_n$ is an estimator of $\rho =
\gamma \tau$, see for instance \citet{FAGdH03}. As in Theorem~\ref{T:estim}, the
asymptotic distribution of $\hat{\tau}_{n}$ plays no role.

Omitting the remainder term in \eqref{E:approx} and replacing the
unknown quantities $\bF(u)$ and $(\gamma, \delta(u), \tau)$ at the
random threshold $u = X_{n-k:n}$ by $k/n$ and $(\hat{\gamma}_n,
\hat{\delta}_n, \hat{\tau}_n)$, respectively, yields the estimator
\[
    \hat{p}_{k,n} = \hat{\bF}_n(x_n) = \frac{k}{n} \bG_{\hat{\gamma}_n, \hat{\delta}_n, \hat{\tau}_n}(x_n / X_{n-k:n}).
\]
In the same way, one can construct estimators for other tail quantities: return levels,
expected shortfall, etc. For brevity, we focus here on tail
probabilities.

In order to describe the asymptotics of $\hat{p}_{k,n}$, we need
to make a distinction between the case $0 < q < 1$ in \eqref{E:pn}
and $q = 0$. The proofs of the following two theorems are to be found
in Appendix~\ref{A:tail}. Results for tail probability estimators based on the PD and GPD approximations can be found in \citet[Section~4.4]{dHF}.

\begin{theorem}
\label{T:tail}
Let $F \in {\cal F}(\gamma, \tau)$, let $k_n$ be an intermediate sequence satisfying \eqref{E:k} and let $p_n$ be such that \eqref{E:pn} holds for some $0 < q < 1$. If \eqref{E:joint}, then
\begin{equation}
\label{E:tail}
    \sqrt{k} \left( \frac{\hat{p}_{k,n}}{p_n} - 1 \right)
    \dto - \gamma^{-1} \Gamma \log q - \gamma^{-1} \Delta (1 - q^{-\rho}) - Z, \qquad n \to \infty.
\end{equation}
\end{theorem}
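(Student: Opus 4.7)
The plan is to decompose $\hat p_{k,n}/p_n$ into a product of factors each of which can be linearised around a deterministic limit. Write $y_n = x_n/X_{n-k:n}$. The first observation is that $y_n$ converges to the constant $q^{-\gamma} > 1$ in probability: by $np_n/k \to q$ and $\bF(X_{n-k:n}) = (k/n)(1 + Z_{k,n}/\sqrt k)$ we have $\bF(x_n)/\bF(X_{n-k:n}) \to q$ in probability, and the regular variation of $\bF$ with index $-1/\gamma$ together with its monotonicity then forces $y_n \to q^{-\gamma}$.

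Next, I apply Proposition~\ref{P:EGPD} with the random threshold $u = X_{n-k:n} \to \infty$. Since the bound there is uniform in $y \geq 1$, evaluating at $y_n$ is legitimate and gives
\[
p_n = \bF(X_{n-k:n})\, \bG_{\gamma,\delta_n,\tau}(y_n) + o_p\bigl(|\delta_n|\, k/n\bigr).
\]
Inserting $\bF(X_{n-k:n}) = (k/n)(1 + Z_{k,n}/\sqrt k)$ and using $\delta_n = O_p(1/\sqrt k)$ (from Theorem~\ref{T:estim}) together with $\bG_{\gamma,\delta_n,\tau}(y_n) \to q$ in probability, a standard linearisation yields
\[
\sqrt k\Bigl(\frac{\hat p_{k,n}}{p_n} - 1\Bigr) = \sqrt k\Bigl(\frac{\bG_{\hat\gamma_n,\hat\delta_n,\hat\tau_n}(y_n)}{\bG_{\gamma,\delta_n,\tau}(y_n)} - 1\Bigr) - Z_{k,n} + o_p(1).
\]

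The remaining task is to analyse the ratio of EPD tail functions by Taylor-expanding $\log \bG_{\gamma,\delta,\tau}(y) = -\gamma^{-1}\log y - \gamma^{-1}\log\{1 + \delta(1 - y^\tau)\}$ around $(\gamma, \delta_n, \tau)$ at $y = y_n$. The partial derivatives at $(\gamma, \delta_n, \tau)$ equal $\gamma^{-2}\log y_n + o(1)$ in $\gamma$ and $-\gamma^{-1}(1 - y_n^\tau) + o(1)$ in $\delta$, while the partial in $\tau$ carries an overall factor of $\delta$ and so is of order $O_p(\delta_n) = O_p(1/\sqrt k)$. Thus
\[
\log\frac{\bG_{\hat\gamma_n,\hat\delta_n,\hat\tau_n}(y_n)}{\bG_{\gamma,\delta_n,\tau}(y_n)} = (\hat\gamma_n - \gamma)\frac{\log y_n}{\gamma^2} - (\hat\delta_n - \delta_n)\frac{1 - y_n^\tau}{\gamma} + o_p(1/\sqrt k).
\]
Multiplying by $\sqrt k$, substituting $\log y_n \to -\gamma \log q$ and $y_n^\tau \to q^{-\rho}$, and appealing to \eqref{E:joint} and Slutsky's theorem produces the stated limit $-\gamma^{-1}\Gamma \log q - \gamma^{-1}\Delta(1 - q^{-\rho}) - Z$.

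The delicate point is the $\hat\tau_n$ contribution in the last display: because $\hat\tau_n - \tau$ is only $o_p(1)$ with no rate, we must crucially use that its coefficient in $\partial_\tau \log \bG$ vanishes at $\delta = 0$ and is thus itself of order $O_p(1/\sqrt k)$ in a shrinking neighbourhood of the true parameter. Making this rigorous requires a mean-value argument valid on a (shrinking) neighbourhood of $(\gamma, 0, \tau)$ and on the event $\{y_n \in [q^{-\gamma}/2, 2q^{-\gamma}]\}$, whose probability tends to one. The quadratic Taylor remainders in $\gamma$ and $\delta$ are harmless because both increments are $O_p(1/\sqrt k)$, so their squares contribute $O_p(1/k) = o_p(1/\sqrt k)$.
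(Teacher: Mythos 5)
Your proposal is correct and follows essentially the same route as the paper's proof: reduce to the ratio (equivalently, log-difference) via Proposition~\ref{P:EGPD} at the random threshold $X_{n-k:n}$ together with $\sqrt{k}\,\delta_n = O_p(1)$ and $y_n \to q^{-\gamma}$, then linearise $\log\bG$ in $(\gamma,\delta)$ and absorb the $\hat\tau_n$ contribution using the fact that it carries a factor $\delta = O_p(k^{-1/2})$, so that only consistency of $\hat\tau_n$ is needed. The paper carries out the same expansion by direct algebra on the three terms of $\log\bG_{\hat\gamma_n,\hat\delta_n,\hat\tau_n}(y_n) - \log\bG_{\gamma,\delta_n,\tau}(y_n)$ rather than via partial derivatives, but the substance is identical.
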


\begin{theorem}
\label{T:tail:extreme}
In Theorem~\ref{T:tail}, if \eqref{E:pn} is replaced by
\[
    np_n / k \to 0 \qquad \text{and} \qquad \log (np_n) / \sqrt{k} \to 0, \qquad n \to \infty,
\]
then
\[
    \frac{\sqrt{k}}{\log \{ k / (np_n) \}} \left( \frac{\hat{p}_{k,n}}{p_n} - 1 \right)
    \dto \gamma^{-1} \Gamma, \qquad n \to \infty.
\]
\end{theorem}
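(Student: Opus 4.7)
The strategy is to work on the logarithmic scale: multiplying by $\sqrt{k}$ suggests the quantity of interest sits at scale $\log\{k/(np_n)\}/\sqrt{k}$, a scale driven exclusively by the estimation error of $\gamma$, while the contributions from $\hat\delta_n$, $\hat\tau_n$ and $Z_{k,n}$ remain at scale $1/\sqrt{k}$ and hence vanish after division by $\log\{k/(np_n)\}$. This is the same mechanism that underlies the classical Weissman extrapolation result, where extrapolation far beyond the sample is dominated by the tail index slope.

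The starting point is the decomposition, with $y_n = x_n/X_{n-k:n}$,
\begin{equation*}
    \log(\hat p_{k,n}/p_n) = \log\{k/(np_n)\} - \hat\gamma_n^{-1}\log y_n - \hat\gamma_n^{-1}\log\{1 + \hat\delta_n(1 - y_n^{\hat\tau_n})\}.
\end{equation*}
Since $\bF \in \RV_{-1/\gamma}$ and $np_n/k \to 0$, the ratio $y_n$ tends to infinity in probability, so $y_n^{\hat\tau_n} \to 0$; combined with $\hat\delta_n = \delta_n + \Delta_{k,n}/\sqrt{k} = O_P(1/\sqrt{k})$ (via \eqref{E:kdelta} and \eqref{E:joint}), the last summand is $O_P(1/\sqrt{k})$. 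To handle the middle summand, I apply $\bF(x) = Cx^{-1/\gamma}\{1 + \gamma^{-1}\delta(x)\}$ at $x = x_n$ and $x = X_{n-k:n}$, take logarithms and subtract. Using $n\bF(X_{n-k:n})/k = 1 + Z_{k,n}/\sqrt{k}$ together with $\delta_n = O_P(1/\sqrt{k})$ and $|\delta(x_n)| = o(|\delta_n|)$ (because $|\delta| \in \RV_\tau$ with $\tau<0$ and $y_n \to \infty$) yields
\begin{equation*}
    \log y_n = \gamma \log\{k/(np_n)\} + O_P(1/\sqrt{k}).
\end{equation*}

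Plugging this back in, expanding $\hat\gamma_n^{-1} = \gamma^{-1} - \gamma^{-2}\Gamma_{k,n}/\sqrt{k} + o_P(1/\sqrt{k})$, and noting that $\log\{k/(np_n)\} = o(\sqrt{k})$ under the hypotheses (since $\log k = o(\sqrt{k})$ and $\log(np_n)/\sqrt{k} \to 0$), the cross-term $\log(k/(np_n))/k$ is absorbed into $O_P(1/\sqrt{k})$, and I arrive at
\begin{equation*}
    \log(\hat p_{k,n}/p_n) = \gamma^{-1}\Gamma_{k,n}\,\log\{k/(np_n)\}/\sqrt{k} + O_P(1/\sqrt{k}).
\end{equation*}
Dividing by $\log\{k/(np_n)\}/\sqrt{k}$ and using $\log\{k/(np_n)\} \to \infty$ gives convergence in distribution to $\gamma^{-1}\Gamma$; since the displayed right-hand side tends to $0$ in probability, a first-order Taylor expansion of $e^z - 1$ at $z = 0$ converts the logarithm into the ratio $\hat p_{k,n}/p_n - 1$ and completes the proof.

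The delicate step I expect is the substitution $\log y_n = \gamma \log\{k/(np_n)\} + O_P(1/\sqrt{k})$: the error must be kept at scale $1/\sqrt{k}$ uniformly even though $y_n$ grows without control. This is why I avoid invoking Proposition~\ref{P:EGPD} at this point, since its \emph{absolute} remainder $o(|\delta_n|)$ can easily exceed the (possibly minuscule) value of $\bF(x_n)$ itself; instead I rely directly on the defining representation of $\mathcal{F}(\gamma,\tau)$ on the log-scale, where errors convert cleanly into additive $O_P$-terms.
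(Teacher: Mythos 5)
Your proof is correct, and it reaches the conclusion by a more self-contained route than the paper's. Both arguments share the same key mechanism for the EPD-specific part: the correction factor $\{1 + \hat\delta_n - \hat\delta_n y_n^{\hat\tau_n}\}^{-1/\hat\gamma_n} = 1 + O_p(k^{-1/2})$, which becomes negligible after division by $\log\{k/(np_n)\} \to \infty$. The difference lies in how the dominant (Weissman) part is handled: the paper simply reduces $\hat p_{k,n}$ to the Weissman estimator $\hat p_n^{\mathrm{W}}$ and cites Theorem~4.4.7 in de Haan and Ferreira for $\frac{\sqrt{k}}{\log d_n}\bigl(\hat p_n^{\mathrm{W}}/p_n - 1\bigr) \dto \gamma^{-1}\Gamma$ with $d_n = k/(np_n)$, whereas you rederive this limit from scratch using the defining representation $\bF(x) = C x^{-1/\gamma}\{1+\gamma^{-1}\delta(x)\}$ of ${\cal F}(\gamma,\tau)$, the normalization $n\bF(X_{n-k:n})/k = 1 + Z_{k,n}/\sqrt{k}$, and the expansion of $\hat\gamma_n^{-1}$, obtaining $\log y_n = \gamma \log\{k/(np_n)\} + O_p(k^{-1/2})$. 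Your route buys independence from the external reference and makes explicit where each hypothesis enters (in particular, $\log(np_n)/\sqrt{k} \to 0$ is used only to ensure $\log\{k/(np_n)\} = o(\sqrt{k})$, which controls the cross-term and justifies converting $\log(\hat p_{k,n}/p_n)$ into $\hat p_{k,n}/p_n - 1$); the price is that steps such as $y_n \to \infty$ in probability and $\delta(x_n) = o_p(|\delta_n|)$ need the uniform convergence theorem for regularly varying functions of negative index, which you correctly flag but would have to spell out in a full write-up. The paper's proof is shorter by delegation; yours is longer but exposes the structure of the extrapolation error directly.
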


For the EPD estimators $\hat{\gamma}_n = \hat{\gamma}_{k,n}$ and $\hat{\delta}_n = \hat{\delta}_{k,n}$, Theorems~\ref{T:estim} and \ref{T:tail} lead to
\begin{equation}
\label{E:pn:EPD}
    \sqrt{k} \left( \frac{\hat{p}_n}{p_n} - 1 \right)
    \dto N \left(0, \sigma^2(q, \rho) \right), \qquad n \to \infty,
\end{equation}
with asymptotic variance given by
\begin{multline*}
    \sigma^2(q, \rho)
    = (\log q)^2 \frac{(1-\rho)^2}{\rho^2} + \left( \frac{1-q^{-\rho}}{\rho} \right)^2 \frac{(1-2\rho)(1-\rho)^2}{\rho^2} \\
    - 2 \log (q) \frac{1 - q^{-\rho}}{\rho} \frac{(1-2\rho)(1-\rho)}{\rho^2} + 1.
\end{multline*} 
The importance of the fact that the limit distribution in \eqref{E:pn:EPD} has mean zero was already discussed after Theorem~\ref{T:estim}. An asymptotic confidence interval of nominal level $1 - \alpha$ is given by
\begin{equation}
\label{E:CI:p}
	\biggl[
		\hat{p}_n \biggl( 1 - \sigma(\hat{q}_n, \hat{\rho}_n) \frac{z_{\alpha/2}}{\sqrt{k}} \biggr), \;
		\hat{p}_n \biggl( 1 + \sigma(\hat{q}_n, \hat{\rho}_n) \frac{z_{\alpha/2}}{\sqrt{k}} \biggr)
	\biggr]
\end{equation}
where $\hat{q}_n = n \hat{p}_n / k_n$ and with $z_{\alpha/2}$ the $1-\alpha/2$ quantile of the standard normal distribution.

If we simply define $\hat{\delta}_n = 0$, then $\Delta_{k,n} = - \sqrt{k} \delta_n$ in \eqref{E:GammaDelta} and thus $\Delta = -\lambda$ in \eqref{E:joint}. The tail probability estimator $\hat{p}_n$ then reduces to the Weissman estimator \citep{Weissman78}
\begin{equation}
\label{E:Weissman}
    \hat{p}_n^{\mathrm{W}} = \frac{k}{n} \left( \frac{x_n}{X_{n-k:n}} \right)^{-1/\hat{\gamma}_n}.
\end{equation}
Theorem~\ref{T:tail} then implies
\begin{equation}
\label{E:AN:Weissman}
    \sqrt{k} \left( \frac{\hat{p}_n^{\mathrm{W}}}{p_n} - 1 \right)
    \dto - \gamma^{-1} \Gamma \log q + \gamma^{-1} \lambda (1 - q^{-\gamma \tau}) - Z,
    \qquad n \to \infty.
\end{equation}
For instance, if we estimate $\gamma$ by the Hill estimator, then in view of Theorem~\ref{T:simpler},
\begin{multline*}
    \sqrt{k} \left( \frac{k}{np_n} \left( \frac{x_n}{X_{n-k:n}} \right)^{-1/H_{k,n}} - 1 \right) \\
    \dto N \left( -\lambda \frac{\rho}{\gamma} \left( \frac{q^{-\rho}-1}{\rho} + \frac{\log q}{1 - \rho}\right), 1 + (\log q)^2 \right),
    \qquad n \to \infty.
\end{multline*}

Even if the extreme value index estimator $\hat{\gamma}_n$ is such that the asymptotic distribution of $\Gamma_n$ has mean zero, then still the asymptotic distribution \eqref{E:AN:Weissman} of the Weissman estimator will have a mean which is proportional to $\lambda$. In other words, unbiased tail estimation requires more than unbiased estimation of the extreme value index alone.

From Theorem~\ref{T:tail:extreme} and its proof, we learn that for estimation of tail probabilities $p_n$ of smaller order than $k/n$, the difference between the Pareto approximation and the EPD approximation does not matter asymptotically. Still, for $\hat{p}_n$ to be an asymptotically unbiased estimator of $p_n$, the estimator $\hat{\gamma}_n$ needs to be asymptotically unbiased for $\gamma$. For instance, if we use the EPD estimator $\hat{\gamma}_{k,n}$, then
\[
    \frac{\sqrt{k}}{\log \{ k / (np_n) \}} \left( \frac{\hat{p}_n}{p_n} - 1 \right)
    \dto N \left( 0, \frac{(1 - \rho)^2}{\rho^2} \right),
    \qquad n \to \infty.
\]

\begin{example}\upshape
\label{Ex:secura}
The Secura Belgian Re data in \citet[Section~1.3.3]{BGST} comprise 371 automobile claims not smaller than \euro 1.2 million. The data span the period 1988--2001 and have been gathered from several European insurance companies. Figure~\ref{F:secura} shows the estimates of $\gamma$ (left) and of the probability of a claim to exceed \euro 7 million (right). Nominal 90 \% confidence intervals for the EPD estimates are added too, see \eqref{E:CI:gamma} and \eqref{E:CI:p}. In the data-set, there were actually 3 exceedances over \euro 1.2 million, yielding a nonparametric estimate of $3/371 = 0.81 \%$. In comparison to the Weissman (Hill) and POT (GPD) estimates, the trajectories of the EPD estimates are relatively stable, with $\hat{\gamma}$ around $0.3$ and $p$ around $0.75 \%$. By way of comparison, in \citet[Section~6.2.4]{BGST} it is suggested to model the complete distribution by a mixture of two components, an exponential and a Pareto distribution, with the knot at about \euro 2.6 million, which corresponds to the order statistic $X_{n-k:n}$ with $k = 95$. Although this knot is detected by the EPD estimator, it does not cause the tail parameter estimates to change dramatically.
\begin{figure}
\begin{center}
\begin{tabular}{cc}
\includegraphics[width = 0.45\textwidth, height = 0.29\textheight]{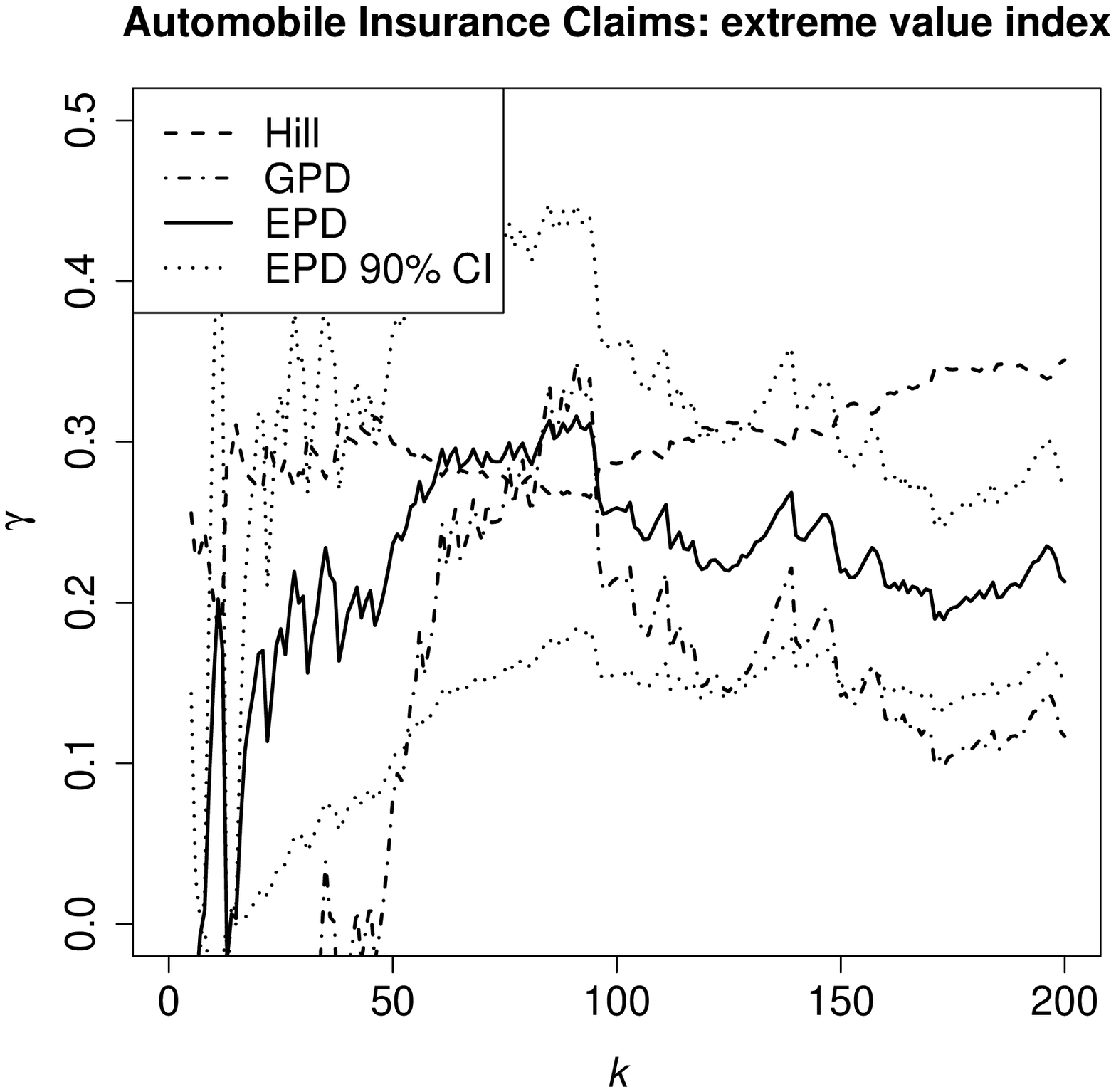} &
\includegraphics[width = 0.45\textwidth, height = 0.29\textheight]{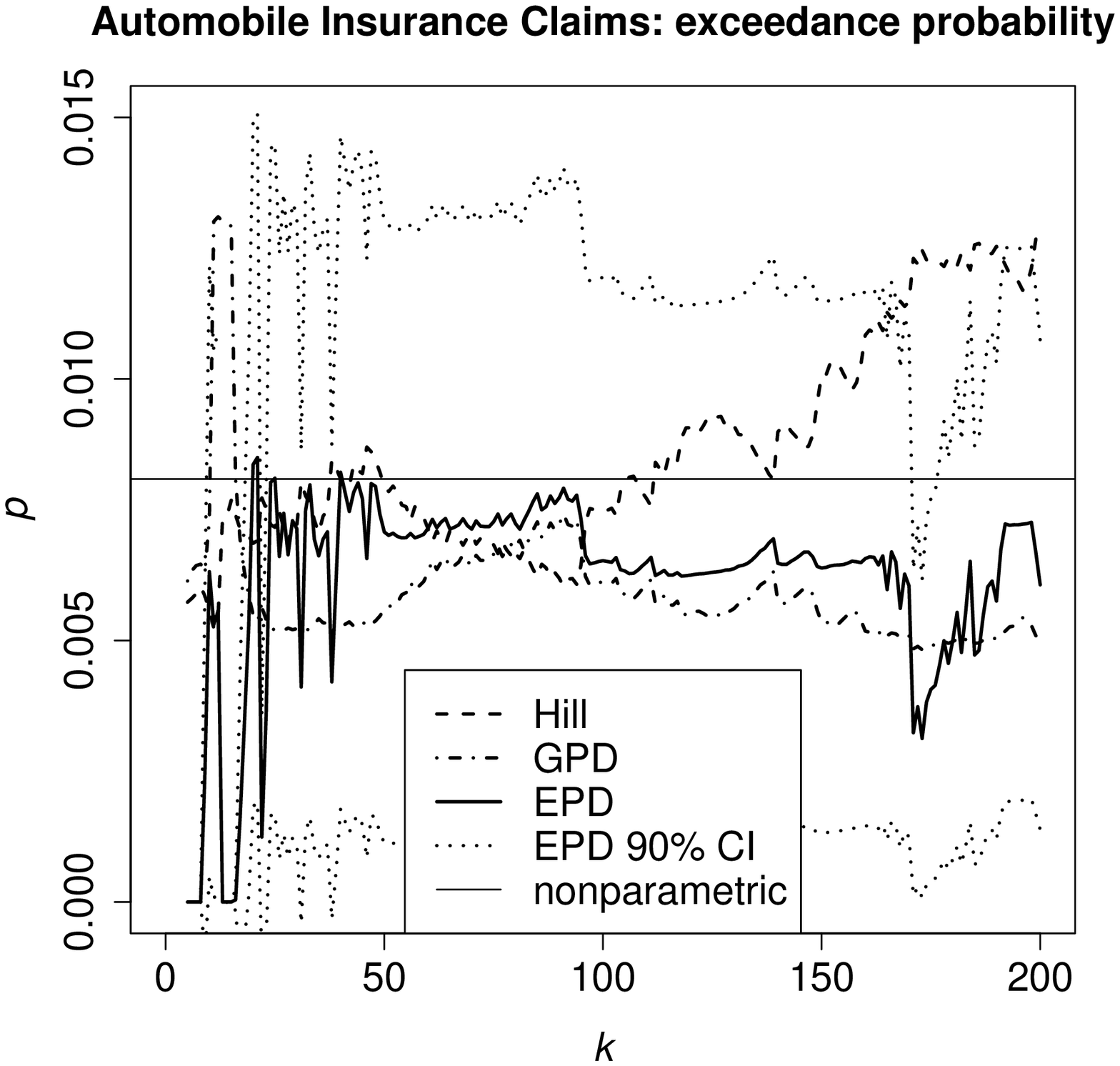}
\end{tabular}
\caption{\it Trajectories of estimates of $\gamma$ (left) and of the exceedance probability over \euro 7 million (right) for the Secura Belgian Re data in Example~\ref{Ex:secura}. \label{F:secura}}
\end{center}
\end{figure}
\end{example}

\appendix
\section{Tail Empirical Processes}
\label{A:simpler}

Recall $H_{k,n}$, $E_{k,n}(s)$ and $Z_{k,n}$ from equations \eqref{E:Hill}, \eqref{E:Ekn} and \eqref{E:Zkn}, respectively, and define
\begin{align}
\label{E:Gammakn}
    \Gamma_{k,n} &= \sqrt{k} (H_{k,n} - \gamma), \\
\label{E:EEkn}
    \mathbb{E}_{k,n}(s) &= \sqrt{k} \left( E_{k,n}(s) - \frac{1}{1 - s \gamma} \right), \qquad s \leq 0.
\end{align}
Our proof of Theorem~\ref{T:estim} will be based on the fact that $(\Gamma_{k,n}, \mathbb{E}_{k,n}, Z_{k,n})$
converges weakly in the space $\RR \times {\cal C}[s_0, 0] \times
\RR$; here $s_0 < 0$ and ${\cal C}[a, b]$ is the Banach space of
continuous functions $f : [a, b] \to \RR$ equipped with the topology
of uniform convergence. Of course, the asymptotic distribution of the normalized Hill estimator $\Gamma_{k,n}$ has been established in numerous other papers; in the following theorem, it is the joint convergence which is our main concern.

\begin{theorem}
\label{T:simpler}
Let $F \in {\cal F}(\gamma, \tau)$. If $k = k_n$ is an intermediate integer sequence satisfying \eqref{E:k}, then for every $s_0 < 0$, in $\RR \times {\cal C}[-s_0, 0] \times \RR$,
\[
    (\Gamma_{k,n}, \EE_{k,n}, Z_{k,n}) \dto (\Gamma, \EE, Z), \qquad n \to \infty,
\]
a Gaussian process with the following distribution: $Z$ is standard normal and is independent of $(\Gamma, \EE)$, and for $s, s_1, s_2 \in [s_0, 0]$,
\begin{align*}
    \E [ \EE (s) ] &= \lambda \frac{s \rho}{(1 - s \gamma - \rho)(1 - s \gamma)}, 
			& \E [ \Gamma ] &= \lambda \frac{\rho}{1-\rho}, \\
    \cov \{ \EE(s_1), \EE(s_2) \} &= \frac{s_1 s_2 \gamma^2}{(1 - s_1 \gamma - s_2 \gamma)(1 - s_1 \gamma)(1 - s_2 \gamma)}, 
			& \var ( \Gamma ) &= \gamma^2, \\
    \cov \{ \Gamma, \EE(s) \} &= \frac{s \gamma^2}{(1 - s \gamma)^2}.
\end{align*}
\end{theorem}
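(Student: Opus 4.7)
The plan is to apply the quantile transformation $X_i \stackrel{d}{=} U(Y_i)$ with $Y_1, \ldots, Y_n$ iid standard Pareto ($\Pr(Y > y) = 1/y$, $y \geq 1$), reducing the problem to classical empirical process theory for iid Pareto functionals plus a second-order bias coming from the regular variation of $a$. The key structural input is the R\'enyi/memoryless property: conditional on $Y_{n-k:n} = y$, the upper order statistics $Y_{n-k+1:n}, \ldots, Y_{n:n}$ are iid samples from the Pareto law truncated below $y$, which equals $y$ times standard Pareto. Hence $T_i := Y_{n-k+i:n}/Y_{n-k:n}$, $i = 1, \ldots, k$, are order statistics of $k$ iid standard Pareto variables, and their joint distribution is exactly independent of $Y_{n-k:n}$. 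This delivers the independence of $Z$ from $(\Gamma, \EE)$ in the limit.

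\emph{Linearization.} Using $U(y) = C^\gamma y^\gamma \{1 + a(y)\}$ (Lemma~\ref{L:rho}) with $|a|\in\RV_\rho$, $\rho<0$, I would expand
\[
    (X_{n-k+i:n}/X_{n-k:n})^s = T_i^{s\gamma}\bigl\{1 + s[a(T_i Y_{n-k:n}) - a(Y_{n-k:n})]\bigr\} + \text{remainder}.
\]
Since $k Y_{n-k:n}/n$ converges in probability to $1$, Potter bounds give $a(T_i Y_{n-k:n}) - a(Y_{n-k:n}) = a(n/k)(T_i^\rho - 1)\{1 + o_p(1)\}$, and then $\sqrt{k}\,a(n/k) \to \lambda$ yields
\[
    \EE_{k,n}(s) = \sqrt{k}\Bigl\{\frac{1}{k}\sum_{i=1}^k T_i^{s\gamma} - \frac{1}{1-s\gamma}\Bigr\} + s\lambda \, \E[T^{s\gamma}(T^\rho - 1)] + o_p(1),
\]
with an analogous decomposition for $\Gamma_{k,n}$ based on $\log T_i$ (which is exponential with mean $1$). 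Direct Pareto-moment calculations give $\E[T^{s\gamma}(T^\rho - 1)] = \rho/[(1-s\gamma-\rho)(1-s\gamma)]$ and $\E[T^\rho - 1] = \rho/(1-\rho)$, reproducing the stated means after multiplication by $\lambda$.

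\emph{Functional CLT and the $Z_{k,n}$ component.} Finite-dimensional convergence of the residual process $s \mapsto \sqrt{k}\{k^{-1}\sum_i T_i^{s\gamma} - (1-s\gamma)^{-1}\}$ is just the multivariate CLT for iid Pareto functionals; the stated covariance formulas then follow from $\E[T^{s\gamma}] = 1/(1-s\gamma)$ via the elementary identities
\[
    \cov(T^{s_1\gamma}, T^{s_2\gamma}) = \frac{s_1 s_2 \gamma^2}{(1-s_1\gamma-s_2\gamma)(1-s_1\gamma)(1-s_2\gamma)}, \qquad \cov(\log T, T^{s\gamma}) = \frac{s\gamma}{(1-s\gamma)^2}.
\]
For tightness in ${\cal C}[s_0, 0]$, the parametric class $\{t \mapsto t^{s\gamma} : s \in [s_0, 0]\}$ is VC-subgraph with square-integrable envelope $t \mapsto t^{s_0 \gamma}$ (since $s_0 < 0$ and $\gamma > 0$), which delivers asymptotic equicontinuity by standard empirical process theory. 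For $Z_{k,n}$, the second part of Lemma~\ref{L:rho} gives $n\bF(X_{n-k:n})/k = n/(k Y_{n-k:n}) + o_p(k^{-1/2})$; since $1/Y_{n-k:n}$ is distributed as a uniform order statistic, the classical CLT yields $Z_{k,n}\dto N(0,1)$, independently of $(\Gamma_{k,n}, \EE_{k,n})$ by the exact independence from the first step.

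\emph{Main obstacle.} The most delicate point is controlling the second-order remainder uniformly in $s \in [s_0, 0]$ when the $T_i$ can be arbitrarily large. One must bound $k^{-1}\sum T_i^{s\gamma}(T_i^\rho - 1)$ uniformly (a Glivenko--Cantelli argument for the class $\{t\mapsto t^{s\gamma}(t^\rho-1) : s\in[s_0,0]\}$ together with Potter bounds for $a$) while simultaneously ensuring the multiplicative $O(a^2)$ error is $o_p(k^{-1/2})$ uniformly in $s$. The intermediate-sequence hypothesis $\sqrt{k}\,a(n/k) \to \lambda$ together with $k/n \to 0$ is precisely the balance that makes this work.
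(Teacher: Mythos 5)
Your proposal is correct and follows essentially the same route as the paper's own proof: the R\'enyi/quantile representation making the relative excesses functions of $k$ iid standard Pareto variables independent of $\check{Y}_{n-k:n}$ (whence the asymptotic independence of $Z$), the linearization of $U(y)=C^\gamma y^\gamma\{1+a(y)\}$ with the uniform convergence theorem for $|a|\in\RV_\rho$ and $\sqrt{k}\,a(n/k)\to\lambda$ producing the deterministic bias terms, and a Donsker/Glivenko--Cantelli argument for the class $\{t\mapsto t^{s\gamma}: s\in[s_0,0]\}$ together with the Pareto moment identities, plus Lemma~\ref{L:rho} for the $Z_{k,n}$ component. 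Only a cosmetic slip: on $[1,\infty)$ the envelope of that class is the constant $1$ (attained at $s=0$), not $t\mapsto t^{s_0\gamma}$, which is immaterial since the class is uniformly bounded (the paper instead invokes the Lipschitz-in-parameter bound $|t^{\theta_1}-t^{\theta_2}|\leq|\theta_1-\theta_2|\log t$).
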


\begin{proof}
Let $Y_1, \Y_1, Y_2, \Y_2, \ldots$ be independent Pareto(1) random
variables. For positive integer $k$, denote the order statistics of
$Y_1, \ldots, Y_k$ by $Y_{1:k} < \cdots < Y_{k:k}$; also, let
$Y_{0:k} = 1$. Similarly, denote the order statistics of $\Y_1,
\ldots, \Y_n$ by $\Y_{1:n} < \cdots < \Y_{n:n}$. Then the following
three vectors are equal in distribution:
\begin{align*}
    ( X_{n-k+i:n} : i = 0, \ldots, k)
    &\eqd ( U(\Y_{n-k+i:n}) : i = 0, \ldots, k ) \\
    &\eqd ( U(Y_{i:k} \Ynk) : i = 0, \ldots, k ).
\end{align*}
Since we are only interested in the asymptotic distribution of $(\Gamma_{k,n}, \EE_{k,n}, Z_{k,n})$, we may without loss of generality assume that actually
\[
    ( X_{n-k+i:n} : i = 0, \ldots, k)
    = ( U(Y_{i:k} \Ynk) : i = 0, \ldots, k ).
\]
The following property is well-known: if $k$ is an intermediate sequence, then
\begin{equation}
\label{E:Ynk}
    \sqrt{k} \{ (n/k) \Ynk^{-1} - 1 \} \dto N(0, 1), \qquad n \to \infty.
\end{equation}
[A quick proof is to employ the distributional representation $\Ynk \eqd (E_1 + \cdots + E_{n+1}) / (E_1 + \ldots + E_k)$, with $E_1, \ldots, E_n$ independent standard exponential random variables.] As a consequence, we have $\Ynk = (n/k) \{1 + o_p(1)\}$ as $n \to \infty$, and therefore, by \eqref{E:k} and the Uniform Convergence Theorem for $\RV_\rho$ \citep[Theorem~1.5.2]{BGT},
\begin{equation}
\label{E:aYnk}
    \sqrt{k} a(\Ynk)
    = \sqrt{k} a(n/k) \frac{a(\Ynk)}{a(n/k)}
    = \lambda + o_p(1), \qquad n \to \infty.
\end{equation}
Since $a(y) \sim \delta(U(y))$ as $n \to \infty$, this also shows that $\sqrt{k} \delta(X_{n-k:n}) = \lambda + o_p(1)$ as $n \to \infty$.

In the next three paragraphs, we
will analyse the components $\Gamma_{k,n}$, $\EE_{k,n}$ and
$Z_{k,n}$ separately. In the fourth and final paragraph, these
analyses will be combined.

\textit{1. The component $\Gamma_{k,n}$.}
Let the function $a$ be as in \eqref{E:a} and define $\eta(y) = \log \{1 + a(y) \}$. Since $\lim_{y \to \infty} a(y) = 0$, we have $\eta(y) = a(y) \{1 + o(1)\}$ as $y \to \infty$, and hence
\begin{equation}
\label{E:etaYnk}
    \sqrt{k} \eta(\Ynk)
    = \lambda + o_p(1), \qquad n \to \infty.
\end{equation} 
In particular, $\eta$ is eventually nonzero and of constant sign, and $|\eta| \in \RV_\rho$. We have
\begin{align*}
    H_{k,n}
    &= \frac{1}{k} \sum_{i=1}^k \log X_{n-k+i:n} - \log X_{n-k:n} \\
    &= \frac{1}{k} \sum_{i=1}^k \log U(Y_i \Ynk) - \log U(\Ynk) \\
    &= \frac{1}{k} \sum_{i=1}^k \{ \gamma \log Y_i + \eta(Y_i \Ynk) - \eta(\Ynk) \}.
\end{align*}
As a consequence,
\begin{align*}
    \Gamma_{k,n}
    &= \sqrt{k} (H_{k,n} - \gamma) \\
    &= \frac{\gamma}{\sqrt{k}} \sum_{i=1}^k (\log Y_i - 1)
    + \sqrt{k} \eta(\Ynk) \frac{1}{k} \sum_{i=1}^k \left( \frac{\eta(Y_i \Ynk)}{\eta(\Ynk)} - 1 \right).
\end{align*}
By the Uniform Convergence Theorem for $\RV_\rho$, for every $x_0 > 0$,
\[
	\lim_{y \to \infty} \sup_{x \geq x_0} \left| \frac{\eta(xy)}{\eta(y)} - x^\rho \right| = 0.
\]
By the last two displays and in view of \eqref{E:etaYnk},
\begin{equation}
\label{E:epsin}
  \max_{i=1, \ldots, k} \left| \frac{\eta(Y_i \Ynk)}{\eta(\Ynk)} - Y_i^\rho \right| = o_p(1), \qquad n \to \infty.
\end{equation}
By \eqref{E:etaYnk} and since $k^{-1} \sum_{i=1}^k Y_i^\rho = (1 - \rho)^{-1} + o_p(1)$ as $k \to \infty$, we find
\begin{equation}
\label{E:emp:G}
    \Gamma_{k,n}
    = \frac{\gamma}{\sqrt{k}} \sum_{i=1}^k (\log Y_i - 1)
    + \lambda \left( \frac{1}{1 - \rho} - 1 \right) + o_p(1), \qquad n \to \infty.
\end{equation}

\textit{2. The component $\EE_{k,n}$.} Recall the notation $\eta(y) = \log \{1 + a(y)\}$, so that $U(y) = C^\gamma y^\gamma \exp\{ \eta(y) \}$. We have
\begin{align*}
    E_{k,n}(s)
    &= \frac{1}{k} \sum_{i=1}^k \left( \frac{X_{n-k+i:n}}{X_{n-k:n}} \right)^s
    = \frac{1}{k} \sum_{i=1}^k \left( \frac{U(Y_i\Ynk)}{U(\Ynk)} \right)^s \\
    &= \frac{1}{k} \sum_{i=1}^k Y_i^{\gamma s} \exp [ s \{ \eta(Y_i \Ynk) - \eta(\Ynk) \} ].
\end{align*}
Writing $\eps_{i,n} = \eta(Y_i \Ynk) / \eta(\Ynk) - Y_i^\rho$, we find
\[
    E_{k,n}(s) = \frac{1}{k} \sum_{i=1}^k Y_i^{\gamma s} \exp \{ s \eta(\Ynk) (Y_i^\rho - 1 + \eps_{i,n})\}.
\]
Recall the elementary inequality $|e^z - 1 - z| \leq (z^2/2) \max(e^z, 1)$, $z \in \RR$. Since $0 < Y_i^{\gamma s} \leq 1$, $0 < Y_i^\rho \leq 1$ and $\max_{i=1, \ldots, n} |\eps_{i,n}| = o_p(1)$ [see \eqref{E:epsin}], we get by \eqref{E:etaYnk},
\begin{multline*}
    \sup_{s \in [s_0, 0]}
    \left| E_{k,n}(s) - \frac{1}{k} \sum_{i=1}^k Y_i^{\gamma s} \{ 1 + s \eta(\Ynk) (Y_i^\rho - 1) \} \right| \\
    = o_p\{|\eta(\Ynk)|\} = o_p(k^{-1/2}), \qquad n \to \infty.
\end{multline*}
For $\theta_0 < 0$, the class of functions $\{f_\theta : \theta \in [\theta_0, 0]\}$ from $[1, \infty)$ to $(0, 1]$ defined by $f_\theta(y) = y^\theta$, $y \geq 1$, satisfies the Glivenko-Cantelli property
\[
    \sup_{\theta \in [\theta_0, 0]} \left| \frac{1}{k} \sum_{i=1}^k Y_i^\theta - \frac{1}{1-\theta} \right|
    = o_p(1), \qquad k \to \infty;
\]
see for instance Example~19.8 in \citet{vdV98} or just use the monotonicity and continuity of $y^\theta$ in $\theta$. In view of \eqref{E:etaYnk}, we obtain
\begin{multline*}
    \sup_{s \in [s_0, 0]}
    \left| E_{k,n}(s) - \frac{1}{k} \sum_{i=1}^k Y_i^{\gamma s}
                - s \eta(\Ynk) \left(\frac{1}{1 - \gamma s - \rho} - \frac{1}{1 - \gamma s}\right) \right| \\
    = o_p\{|\eta(\Ynk)|\} = o_p(k^{-1/2}), \qquad n \to \infty.
\end{multline*}
Using \eqref{E:etaYnk} again, we find
\begin{align*}
    \EE_{k,n}(s)
    &= \sqrt{k} \left( E_{k,n}(s) - \frac{1}{1 - \gamma s} \right) \\
    &= \frac{1}{\sqrt{k}} \sum_{i=1}^k \left( Y_i^{\gamma s} - \frac{1}{1 - \gamma s} \right)
    + s \lambda \left(\frac{1}{1 - \gamma s - \rho} - \frac{1}{1 - \gamma s}\right)
    + \eps_n(s),
\end{align*}
with
\begin{equation}
\label{E:emp:E}
    \sup_{s \in [s_0, 0]} |\eps_n(s)| = o_p(1), \qquad n \to \infty.
\end{equation}

\textit{3. The component $Z_{k,n}$.}
By \eqref{E:delta} and \eqref{E:a}, we find
\[
    y \bF(U(y)) = 1 + o\{|a(y)|\}, \qquad y \to \infty.
\]
As a consequence,
\begin{align*}
    \bF(X_{n-k:n})
    &= \bF(U(\Ynk)) \\
		&= \Ynk^{-1} [ 1 + o_p\{|a(\Ynk)|\} ] \\
    &= \Ynk^{-1} \{ 1 + o_p(k^{-1/2}) \}, \qquad n \to \infty,
\end{align*}
where we used \eqref{E:aYnk} in the last step. We obtain
\begin{align}
\label{E:emp:Z}
    Z_{k,n}
    &= \sqrt{k} \{ (n/k) \bF(X_{n-k:n}) - 1 \} \nonumber \\
    &= \sqrt{k} \{ (n/k) \Ynk^{-1} - 1 \} + o_p(1), \qquad n \to \infty.
\end{align}

\textit{4. Joint convergence.}
Define
\begin{align*}
    \tilde{\Gamma}_k &= \frac{\gamma}{\sqrt{k}} \sum_{i=1}^k (\log Y_i - 1), \\
    \tilde{\EE}_k(s) &= \frac{1}{\sqrt{k}} \sum_{i=1}^k \left( Y_i^{\gamma s} - \frac{1}{1 - \gamma s} \right), \qquad s \in [s_0, 0].
\end{align*}
For $\theta_0 < 0$, the class of functions $\{f_\theta : \theta \in [\theta_0, 0]\}$ defined by $f_\theta(y) = y^\theta$, $y \geq 1$, is Donsker with respect to the Pareto(1) distribution; this follows from Example~19.7 in \citet{vdV98} upon noting that $|y^{\theta_1} - y^{\theta_2}| \leq |\theta_1 - \theta_2| \log y$ for $\theta_1 \leq 0$, $\theta_2 \leq 0$ and $y \geq 1$. As a consequence, in $\RR \times {\cal C}[s_0, 0]$,
\begin{equation}
\label{E:tilde}
    (\tilde{\Gamma}_k, \tilde{\EE}_k) \dto (\tilde{\Gamma}, \tilde{\EE}), \qquad k \to \infty,
\end{equation}
a centered Gaussian process with covariance function
\begin{align*}
    \var \tilde{\Gamma}
    &= \var (\gamma \log Y_1) = \gamma^2, \\
    \cov \{ \tilde{\EE}(s_1), \tilde{\EE}(s_2) \}
    &= \cov ( Y_1^{\gamma s_1}, Y_1^{\gamma s_2} )
    =  \frac{1}{1 - s_1 \gamma - s_2 \gamma} - \frac{1}{(1 - s_1 \gamma)(1 - s_2 \gamma)}, \\
    \cov \{ \tilde{\Gamma}, \tilde{\EE}(s) \}
    &= \cov ( \gamma \log Y_1, Y_1^{\gamma s} )
    = \frac{s \gamma^2}{(1 - s \gamma)^2}.
\end{align*}
By \eqref{E:emp:G} and \eqref{E:emp:E}, it follows that in $\RR \times {\cal C}[s_0, 0]$,
\[
    (\Gamma_{k,n}, \EE_{k,n}) \dto (\Gamma, \EE), \qquad n \to \infty.
\]
Finally, from \eqref{E:Ynk} and \eqref{E:emp:Z} it follows that $(\Gamma_{k,n}, \EE_{k,n}, Z_{k,n}) \dto (\Gamma, \EE, Z)$ as $n \to \infty$, where $Z$ is standard normally distributed and is independent of $(\Gamma, \EE)$.
\end{proof}

\section{Proof of Theorem~\ref{T:estim}}
\label{A:estim}

The fact that $\sqrt{k} \delta(X_{n-k:n}) = \lambda + o_p(1)$ as $n \to \infty$ has already been shown in the proof of Theorem~\ref{T:simpler}; in particular, see \eqref{E:aYnk}. Recall $\Gamma_{k,n}$ and $\EE_{k,n}(s)$ in equations~\eqref{E:Gammakn} and \eqref{E:EEkn}, respectively, and write $\hat{\tau}_{k,n} = \hat{\rho}_n / H_{k,n}$. We have
\begin{align*}
    \lefteqn{
    \sqrt{k} \left( E_{k,n}(\hat{\tau}_{k,n}) - \frac{1}{1 - \hat{\rho}_n} \right)
    } \\
    &= \sqrt{k} \left( E_{k,n}(\hat{\tau}_{k,n}) - \frac{1}{1 - \gamma \hat{\tau}_n} \right)
    + \sqrt{k} \left( \frac{1}{1 - \gamma \hat{\rho}_n / H_{k,n}} - \frac{1}{1 - \hat{\rho}_n} \right) \\
    &= \EE_{k,n}(\hat{\tau}_{k,n})
    - \frac{1}{H_{k,n}} \frac{\hat{\rho}_n}{(1 - \gamma \hat{\rho}_{k,n} / H_{k,n} )(1 - \hat{\rho}_n)} \Gamma_{k,n}.
\end{align*}
By Theorem~\ref{T:simpler}, $H_{k,n} = \gamma + k^{-1/2} \Gamma_{k,n} = \gamma + o_p(1)$ and thus $\hat{\tau}_{k,n} = \tau + o_p(1)$  as $n \to \infty$. It follows that
\[
    \sqrt{k} \left( E_{k,n}(\hat{\tau}_{k,n}) - \frac{1}{1 - \hat{\rho}_n} \right)
    = \EE_{k,n}(\hat{\tau}_{k,n}) - \frac{\rho}{\gamma(1-\rho)^2} \Gamma_{k,n} + o_p(1), \qquad n \to \infty.
\]
Substituting this into the definition of $\hat{\delta}_{k,n}$ yields
\begin{multline}
\label{E:estim:delta}
    \sqrt{k} \hat{\delta}_{k,n}
    = \gamma (1 - 2\rho) (1 - \rho)^3 \rho^{-4}
    \left( \EE_{k,n}(\hat{\tau}_{k,n}) - \frac{\rho}{\gamma(1-\rho)^2} \Gamma_{k,n} \right) + o_p(1), \\
    n \to \infty,
\end{multline}
as well as
\begin{align}
\label{E:estim:gamma}
    \sqrt{k} (\hat{\gamma}_{k,n} - \gamma)
    &= \sqrt{k} \left( H_{k,n} - \hat{\delta}_{k,n} \frac{\hat{\rho}_n}{1 - \hat{\rho}_n} - \gamma \right) \nonumber \\
    &= \Gamma_{k,n} - \sqrt{k} \hat{\delta}_{k,n} \frac{\rho}{1 - \rho} + o_p(1) \nonumber \\
    &= \frac{(1-\rho)^2}{\rho^2} \left( \Gamma_{k,n} - \gamma \frac{1 - 2\rho}{\rho} \EE_{k,n}(\hat{\tau}_{k,n}) \right) + o_p(1),
    \qquad n \to \infty.
\end{align}
From $\hat{\tau}_{k,n} = \tau + o_p(1)$ and Theorem~\ref{T:simpler}, it follows that in $\RR \times {\cal C}[s_0, 0] \times \RR \times \RR$,
\[
    (\Gamma_{k,n}, \EE_{k,n}, Z_{k,n}, \hat{\tau}_{n,k})
    \dto (\Gamma, \EE, Z, \tau), \qquad n \to \infty.
\]
For $s_0 < \tau$, we have $\Pr(s_0 \leq \hat{\tau}_{n,k} \leq 0) \to 1$ as $n \to \infty$, and thus, by the previous display and the continuous mapping theorem,
\[
    (\Gamma_{k,n}, \EE_{k,n}(\hat{\tau}_{k,n}), Z_{k,n})
    \dto (\Gamma, \EE(\tau), Z), \qquad n \to \infty.
\]
In view of \eqref{E:estim:delta} and \eqref{E:estim:gamma}, as $n \to \infty$,
\begin{multline}
\label{E:estim:10}
    \left(
        \sqrt{k} (\hat{\gamma}_{k,n} - \gamma),
        \sqrt{k} \hat{\delta}_{k,n},
        Z_{k,n}
    \right) \\
    \shoveleft{\qquad \dto
    \Biggl(
        \frac{(1-\rho)^2}{\rho^2} \left( \Gamma - \gamma \frac{1 - 2\rho}{\rho} \EE(\tau) \right),} \\	
        \frac{(1-2\rho)(1-\rho)}{\rho^3} \left( - \Gamma + \gamma \frac{(1-\rho)^2}{\rho} \EE(\tau) \right),
        Z
    \Biggr).
\end{multline}
The vector $(\Gamma, \EE(\tau), Z)$ is trivariate normal, with $Z$ standard normal and independent of $(\Gamma, \EE(\tau))$, with $\Gamma$ as in Theorem~\ref{T:simpler}, and with
\begin{align*}
    \E [ \EE(\tau) ]
    &=  \lambda \frac{\rho^2}{\gamma(1-2\rho)(1-\rho)}, &
    \var \{ \EE(\tau) \}
    &=  \frac{\rho^2}{(1-2\rho)(1-\rho)^2}, \\
    \cov \{ \Gamma, \EE(\tau) \}
    &= \gamma \frac{\rho}{(1-\rho)^2}.
\end{align*}
As a consequence, the distribution of the limit vector in \eqref{E:estim:10} is trivariate normal with mean vector $(0, \lambda, 0)'$ and covariance matrix $\Sigma$ as in \eqref{E:Sigma}.

\section{Proofs for Section~\ref{S:prob}}
\label{A:tail}

\paragraph*{Proof of Theorem~\ref{T:tail}}
Put $y_n = x_n / X_{n-k:n}$, recall $\delta_n = \delta(X_{n-k:n})$, and define
\[
    \tilde{p}_n = \bF(X_{n-k:n}) \bG_{\gamma, \delta_n, \tau}(y_n).
\]
Since $k \to \infty$ and $p_n \to 0$ as $n \to \infty$, it is sufficient to prove \eqref{E:tail} with $\hat{p}_n/p_n - 1$ replaced by $\log \hat{p}_n - \log p_n$. Let us write
\[
    \sqrt{k} (\log \hat{p}_n - \log p_n)
    = \sqrt{k} (\log \hat{p}_n - \log \tilde{p}_n) + \sqrt{k} (\log \tilde{p}_n - \log p_n)
\]
and treat the two terms on the right-hand side separately.

\textit{1. The term $\sqrt{k} (\log \tilde{p}_n - \log p_n)$.} We have
\[
    \log \tilde{p}_n - \log p_n
    = \log \bG_{\gamma, \delta_n, \tau}(y_n)
        - \log \frac{\bF(y_n X_{n-k:n})}{\bF(X_{n-k:n})}.
\]
Since $(n/k_n) \bF(y_n X_{n-k:n}) \to q$ and $(n/k_n) \bF(X_{n-k:n}) = 1 + o_p(1)$ as $n \to \infty$,
\[
    \frac{\bF(y_n X_{n-k:n})}{\bF(X_{n-k:n})}
    = \frac{\bF(x_n)}{\bF(X_{n-k:n})}
    = q + o_p(1), \qquad n \to \infty.
\]
Since moreover $\bF$ is monotone and regularly varying of index $-1/\gamma$, this forces $y_n \to y$ as $n \to \infty$ with $y^{-1/\gamma} = q$, or $y = q^{-\gamma} \in (1, \infty)$. By Proposition~\ref{P:EGPD}, we find
\[
    \frac{\log \tilde{p}_n - \log p_n}{\delta_n} = o_p(1), \qquad n \to \infty.
\]
Finally, from $\sqrt{k} \delta_n = \lambda + o_p(1)$ as $n \to \infty$, we can conclude that
\begin{equation}
\label{E:tail:7}
    \sqrt{k} (\log \tilde{p}_n - \log p_n)
    = \sqrt{k} \delta_n \frac{\log \tilde{p}_n - \log p_n}{\delta_n}
    = o_p(1), \qquad n \to \infty.
\end{equation}

\textit{2. The term $\sqrt{k} (\log \hat{p}_n - \log \tilde{p}_n)$.} We have
\begin{multline}
\label{E:tail:10}
    \log \hat{p}_n - \log \tilde{p}_n
    = \{\log (k/n) - \log \bF(X_{n-k:n})\}  \\
     + \{\log \bG_{\hat{\gamma}_n, \hat{\delta}_n, \hat{\tau}_n}(y_n) - \log \bG_{\gamma, \delta_n, \tau}(y_n)\}.
\end{multline}
The first term on the right-hand side is
\begin{align*}
    \log (k/n) - \log \bF(X_{n-k:n})
    &= - \log \{ n \bF(X_{n-k:n}) / k \} \\
    &= - \log (1 + k^{-1/2} Z_n) \\
    &= - k^{-1/2} Z_n + o_p(k^{-1/2}), \qquad n \to \infty.
\end{align*}
For the second term on the right-hand side in \eqref{E:tail:10}, we proceed as follows. Since $y_n = y + o_p(1)$ as $n \to \infty$ and $y > 1$, it is sufficient to work on the event $y_n > 1$. Then
\begin{align}
\label{E:tail:20}
    \lefteqn{
    \log \bG_{\hat{\gamma}_n, \hat{\delta}_n, \hat{\tau}_n}(y_n) - \log \bG_{\gamma, \delta_n, \tau}(y_n)
    } \nonumber \\
    &= \log [ \{ y_n (1 + \hat{\delta}_n - \hat{\delta}_n y_n^{\hat{\tau}_n} ) \}^{-1/\hat{\gamma}_n} ]
    - \log [ \{ y_n (1 + \delta_n - \delta_n y_n^\tau ) \}^{-1/\gamma} ] \nonumber \\
    &= \left((-\frac{1}{\hat{\gamma}_n}) - (-\frac{1}{\gamma}) \right) \log y_n \nonumber \\
    & \qquad \mbox{} + (-\frac{1}{\hat{\gamma}_n}) \{ \log (1 + \hat{\delta}_n - \hat{\delta}_n y_n^{\hat{\tau}_n})
    - \log (1 + \delta_n - \delta_n y_n^\tau) \} \nonumber \\
    & \qquad \mbox{} + \left((-\frac{1}{\hat{\gamma}_n}) - (-\frac{1}{\gamma}) \right) \log (1 + \delta_n - \delta_n y_n^\tau).
\end{align}
We treat the three terms on the right-hand side of \eqref{E:tail:20} in turn. First,
\begin{align*}
    (-\frac{1}{\hat{\gamma}_n}) - (-\frac{1}{\gamma})
    &= \frac{\hat{\gamma}_n - \gamma}{\hat{\gamma}_n \gamma} \\
    &= k^{-1/2} \gamma^{-2} \Gamma_n + O_p(k^{-1}), \qquad n \to \infty.
\end{align*}
Second, $\delta_n = O_p(k^{-1/2})$ and therefore also $\hat{\delta}_n = O_p(k^{-1/2})$ as $n \to \infty$. Hence the second term on the right-hand side of \eqref{E:tail:20} is
\begin{align*}
    \lefteqn{
    -\hat{\gamma}_n^{-1} \{ \log (1 + \hat{\delta}_n - \hat{\delta}_n y_n^{\hat{\tau}_n})
    - \log (1 + \delta_n - \delta_n y_n^\tau) \}
    } \\
    &= \{- \gamma^{-1} + O_p(k^{-1/2}) \}
        \{ \hat{\delta}_n - \hat{\delta}_n y_n^{\hat{\tau}_n} - \delta_n + \delta_n y_n^\tau + O_p(k^{-1}) \} \\
    &= - k^{-1/2} \gamma^{-1} \Delta_n (1 - y_n^\tau) + o_p(k^{-1/2}), \qquad n \to \infty.
\end{align*}
The third term on the right-hand side of \eqref{E:tail:20} is $O_p(k^{-1/2}) O_p(k^{-1/2}) = O_p(k^{-1})$. All in all, we find
\begin{equation}
\label{E:tail:30}
    \sqrt{k} (\log \hat{p}_n - \log \tilde{p}_n)
    = - Z_n + \gamma^{-2} \Gamma_n \log y - \gamma^{-1} \Delta_n (1 - y^\tau) + o_p(1)
\end{equation}
as $n \to \infty$. Combine \eqref{E:tail:20} and \eqref{E:tail:30} and recall $y = q^{-\gamma}$ and $\rho = \gamma \tau$ to find the result.
\hfill $\Box$

\paragraph*{Proof of Theorem~\ref{T:tail:extreme}}
Recall the Weissman estimator $\hat{p}_n^{\mathrm{W}}$ in
\eqref{E:Weissman} and put $d_n = k / (np_n)$. From Theorem~4.4.7 in \citet{dHF}, it
follows that
\[
    \frac{\sqrt{k}}{\log d_n} \left( \frac{\hat{p}_n^{\mathrm{W}}}{p_n} - 1 \right)
    \dto \gamma^{-1} \Gamma, \qquad n \to \infty.
\]
Moreover, writing $y_n = x_n / X_{n-k:n}$,
\[
    \frac{\hat{p}_n}{\hat{p}_n^{\mathrm{W}}}
    = \{ 1 + \hat{\delta}_n - \hat{\delta}_n y_n^{\hat{\tau}_n} \}^{-1/\hat{\gamma}_n}
    = 1 + O_p(k^{-1/2}), \qquad n \to \infty.
\]
As $\log d_n \to \infty$, we find that $\hat{p}_n$ and $\hat{p}_n^{\mathrm{W}_n}$ have the same asymptotic distribution.
\hfill $\Box$

\section*{Acknowledgments}
We are grateful to two referees for their speedy reports containing thoughtful and constructive remarks.

\bibliographystyle{elsart-harv}
\bibliography{Biblio}

\begin{thebibliography}{33}
\expandafter\ifx\csname natexlab\endcsname\relax\def\natexlab#1{#1}\fi
\expandafter\ifx\csname url\endcsname\relax
  \def\url#1{\texttt{#1}}\fi
\expandafter\ifx\csname urlprefix\endcsname\relax\def\urlprefix{URL }\fi

\bibitem[{Balkema and de~Haan(1974)}]{BalkemadeHaan74}
Balkema, A.~A., de~Haan, L., 1974. Residual life time at great age. The Annals
  of Probability 2, 792--804.

\bibitem[{Beirlant et~al.(2006)Beirlant, Bouquiaux, and Werker}]{BBW06}
Beirlant, J., Bouquiaux, C., Werker, B.~J., 2006. Semiparametric lower bounds
  for tail index estimation. Journal of Statistical Planning and Inference 136,
  705--729.

\bibitem[{Beirlant et~al.(1999)Beirlant, Dierckx, Goegebeur, and
  Matthys}]{BDGM99}
Beirlant, J., Dierckx, G., Goegebeur, Y., Matthys, G., 1999. Tail index
  estimation and an exponential regression model. Extremes 2, 177--200.

\bibitem[{Beirlant et~al.(2002{\natexlab{a}})Beirlant, Dierckx, Guillou, and
  St\u{a}ric\u{a}}]{DBGS02}
Beirlant, J., Dierckx, G., Guillou, A., St\u{a}ric\u{a}, C.,
  2002{\natexlab{a}}. On exponential representations of log-spacings of extreme
  order statistics. Extremes 5, 157--180.

\bibitem[{Beirlant et~al.(2004)Beirlant, Goegebeur, Segers, and Teugels}]{BGST}
Beirlant, J., Goegebeur, Y., Segers, J., Teugels, J., 2004. Statistics of
  Extremes: Theory and Applications. Wiley, Chichester.

\bibitem[{Beirlant et~al.(2002{\natexlab{b}})Beirlant, Joossens, and
  Segers}]{BJS02}
Beirlant, J., Joossens, E., Segers, J., 2002{\natexlab{b}}. Modelling excesses
  over high thresholds by perturbed generalized {Pareto} distributions. Tech.
  Rep. 2002-030, {EURANDOM}, Eindhoven.

\bibitem[{Bingham et~al.(1987)Bingham, Goldie, and Teugels}]{BGT}
Bingham, N.~H., Goldie, C.~M., Teugels, J.~L., 1987. Regular Variation.
  Cambridge University Press, Cambridge.

\bibitem[{Coles(2001)}]{Coles01}
Coles, S.~G., 2001. An Introduction to Statistical Modelling of Extreme Values.
  Springer series in statistics. Springer-Verlag, London.

\bibitem[{Cooray and Ananda(2005)}]{CoorayAnanda05}
Cooray, K., Ananda, M.~A., 2005. Modeling actuarial data with a composite
  lognormal-{Pareto} model. Scandinavian Actuarial Journal 5, 321--334.

\bibitem[{Davison and Smith(1990)}]{DavisonSmith90}
Davison, A.~C., Smith, R.~L., 1990. Models for exceedances over high
  thresholds. Journal of the Royal Statistical Society, Series B 52, 393--442.

\bibitem[{de~Haan and Ferreira(2006)}]{dHF}
de~Haan, L., Ferreira, A., 2006. Extreme Value Theory: An Introduction.
  Springer, New York.

\bibitem[{de~Haan and Stadtm\"uller(1996)}]{dHS96}
de~Haan, L., Stadtm\"uller, U., 1996. Generalized regular variation of second
  order. Journal of the Australian Mathematical Society (Series A) 61,
  381--395.

\bibitem[{Drees(1996)}]{Drees96}
Drees, H., 1996. Refined pickands estimators with bias correction.
  Communications in Statistics -- Theory and Methods 25, 837--851.

\bibitem[{Drees(1998)}]{Drees98}
Drees, H., 1998. A general class of estimators of the extreme value index.
  Journal of Statistical Planning and Inference 66, 95--112.

\bibitem[{Drees et~al.(2004)Drees, Ferreira, and de~Haan}]{DFdH04}
Drees, H., Ferreira, A., de~Haan, L., 2004. On maximum likelihood estimation of
  the extreme value index. The Annals of Applied Probability 14, 1179--1201.

\bibitem[{Feuer\-verger and Hall(1999)}]{FeuervergerHall99}
Feuer\-verger, A., Hall, P., 1999. Estimating a tail exponent by modelling
  departure from a {Pareto} distribution. The Annals of Statistics 27,
  760--781.

\bibitem[{Fraga~Alves et~al.(2003{\natexlab{a}})Fraga~Alves, de~Haan, and
  Lin}]{FAdHL03}
Fraga~Alves, M.~I., de~Haan, L., Lin, T., 2003{\natexlab{a}}. Estimation of the
  parameter controlling the speed of convergence in extreme value theory.
  Mathematical Methods in Statistics 12, 155--176.

\bibitem[{Fraga~Alves et~al.(2003{\natexlab{b}})Fraga~Alves, Gomes, and
  de~Haan}]{FAGdH03}
Fraga~Alves, M.~I., Gomes, M.~I., de~Haan, L., 2003{\natexlab{b}}. A new class
  of semi-parametric estimators of the second order parameter. Portugaliae
  Mathematica 60, 193--214.

\bibitem[{Frigessi et~al.(2002)Frigessi, Haug, and Rue}]{FHR02}
Frigessi, A., Haug, O., Rue, H., 2002. A dynamic mixture model for unsupervised
  tail estimation without threshold selection. Extremes 5, 219--235.

\bibitem[{Geluk and de~Haan(1987)}]{GdH87}
Geluk, J., de~Haan, L., 1987. Regular variation, extensions and tauberian
  theorems. Tech. Rep.~40, CWI tract, Amsterdam.

\bibitem[{Gomes and Martins(2002)}]{GM02}
Gomes, M.~I., Martins, M.~J., 2002. Asymptotically unbiased estimators of the
  tail index based on external estimation of the second order parameter.
  Extremes 5, 5--31.

\bibitem[{Gomes and Martins(2004)}]{GM04}
Gomes, M.~I., Martins, M.~J., 2004. Bias reduction and explicit semi-parametric
  estimation of the tail index. Journal of Statistical Planning and Inference
  124, 361--–378.

\bibitem[{Gomes et~al.(2000)Gomes, Martins, and Neves}]{GMN00}
Gomes, M.~I., Martins, M.~J., Neves, M., 2000. Alternatives to a
  semi-parametric estimator of parameters of rare events -- the {J}ackknife
  methodology. Extremes 3, 207--229.

\bibitem[{Hall(1982)}]{Hall82}
Hall, P., 1982. On some simple estimates of an exponent of regular variation.
  Journal of the Royal Statistical Society, Series B 44, 37--42.

\bibitem[{Hill(1975)}]{Hill75}
Hill, B.~M., 1975. A simple approach to inference about the tail of a
  distribution. The Annals of Statistics 3, 1163--1174.

\bibitem[{Peng(1998)}]{Peng98}
Peng, L., 1998. Asymptotically unbiased estimators for the extreme-value index.
  Statistics \& Probability Letters 38, 107--115.

\bibitem[{Peng and Qi(2004)}]{PengQi04}
Peng, L., Qi, Y., 2004. Estimating the first- and second-order parameters of a
  heavy-tailed distribution. Australian and New-Zealand Journal of Statistics
  46~(2), 305--312.

\bibitem[{Pickands(1975)}]{Pickands75}
Pickands, J., 1975. Statistical inference using extreme order statistics. The
  Annals of Statistics 3, 119--131.

\bibitem[{Reiss(1989)}]{Reiss89}
Reiss, R.-D., 1989. Approximate Distributions of Order Statistics. Springer
  Series in Statistics. Springer, New York.

\bibitem[{Segers(2005)}]{Segers05}
Segers, J., 2005. Generalized {P}ickands estimators for the extreme value
  index. Journal of Statistical Planning and Inference 28, 381--396.

\bibitem[{Smith(1987)}]{Smith87}
Smith, R.~L., 1987. Estimating tails of probability distributions. The Annals
  of Statistics 15, 1174--1207.

\bibitem[{van~der Vaart(1998)}]{vdV98}
van~der Vaart, A., 1998. Asymptotic Statistics. Cambridge University Press,
  Cambridge.

\bibitem[{Weissman(1978)}]{Weissman78}
Weissman, I., 1978. Estimation of parameters and larger quantiles based on the
  $k$ largest observations. Journal of the American Statistical Association 73,
  812--815.

\end{thebibliography}

\end{document}